\newcommand{\C}{\tilde{C}}
\newcommand{\Sum}{\displaystyle\sum}
\renewcommand{\C}{\mathcal{C}}
\renewcommand{\P}{\mathcal{P}}
\newcommand{\U}{\mathcal{U}}
\newcommand{\D}{{\mathcal{D}}}
\newcommand{\R}{{\mathcal{R}}}
\newcommand{\M}{\mathcal{M}}
\newcommand{\x}{\xi}
\newcommand{\E}{{\mathcal{E}}}
\newcommand{\K}{{\mathcal{K}}}
\newcommand{\beqs}{\begin{equation*}}
\newcommand{\eeqs}{\end{equation*}}
\numberwithin{equation}{section}
 \theoremstyle{plain}
\newtheorem{theorem}{Theorem}[section]
\newtheorem{corollary}[theorem]{Corollary}
\theoremstyle{remark}
\begin{document}
\makeatletter
\def\imod#1{\allowbreak\mkern10mu({\operator@font mod}\,\,#1)}
\makeatother

\author{Ali Kemal Uncu}
   \address{Department of Mathematics, University of Florida, 358 Little Hall, Gainesville FL 32611, USA}
   \email{akuncu@ufl.edu}

\title[\scalebox{.9}{Weighted Rogers--Ramanujan Partitions and Dyson Crank}]{Weighted Rogers--Ramanujan Partitions and Dyson Crank}
     
\begin{abstract} In this paper we refine a weighted partition identity of Alladi. We write formulas for generating functions for the number of partitions grouped with respect to a partition statistic other than the norm. We tie our weighted results and the different statistic with the crank of a partition. In particular, we prove that the number of partitions into even number of distinct parts whose odd-indexed parts' sum is $n$ is equal to the number of partitions of $n$ with non-negative crank.
\end{abstract}   
   
\keywords{Dyson crank, Partitions, Partition Identities, Weights, Rogers-Ramanujan}
  
\subjclass[2010]{05A15, 05A17, 05A19, 11B75, 11P81, 11P84} 


\date{\today}

\maketitle

\section{Introduction and Notations} 
A \textit{partition} is a finite sequence $\pi=(\lambda_1,\lambda_2,\dots)$ of decreasing (not necessarily strict) positive integers. The elements of the sequence $\pi$ are called the \textit{parts} of the partition $\pi$. We define the \textit{norm} of a partition $\pi$ as the sum of all its parts, $\lambda_1+\lambda_2+\dots$, and this will be denoted as $|\pi|$. As an example, there are 5 partitions, $(4),\ (3,1),\ (2,2),\ (2,1,1),\ (1,1,1,1)$, with norm equal to 4. For an integer $n$, we will use \textit{partitions of $n$} to denote the set of all the partitions with norm $n$. Abiding the general convention, we accept the empty sequence as a partition, and it is the unique partition of 0. 

The norm of partitions is one of the the most natural statistics. There are finitely many partitions with a fixed norm. This makes the norm a great candidate for indexing generating functions. The theory of partitions is primarily concerned with the relationship between the sizes of different sets of partitions where elements from both sets have the same norm. One early example is due to Euler \cite{Theory of Partitions}.

\begin{theorem}[Euler, 1748]\label{EulerTHM} The number of partitions of $n$ into distinct parts is the same as the number of partitions of $n$ into odd parts.
\end{theorem}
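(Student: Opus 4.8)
The plan is to prove Theorem~\ref{EulerTHM} by comparing the generating functions of the two families of partitions and showing that they coincide as formal power series in $q$. First I would write down each generating function. Since a partition into distinct parts is specified by choosing, independently for each positive integer $n$, whether the part $n$ appears or not, the series counting partitions into distinct parts by norm is
\[
\prod_{n=1}^{\infty}(1+q^n).
\]
On the other hand, a partition into odd parts is specified by choosing a multiplicity (with no upper bound) for each odd integer $2n-1$, so the series counting partitions into odd parts by norm is
\[
\prod_{n=1}^{\infty}\frac{1}{1-q^{2n-1}}.
\]
The theorem is then equivalent to the assertion that these two products are equal.

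The key step is the elementary factorization $1+q^n=(1-q^{2n})/(1-q^n)$, valid factor-by-factor. Applying it to the first product gives
\[
\prod_{n=1}^{\infty}(1+q^n)=\prod_{n=1}^{\infty}\frac{1-q^{2n}}{1-q^n}.
\]
Next I would split the denominator according to parity, $\prod_{n\geq 1}(1-q^n)=\prod_{n\geq 1}(1-q^{2n})\cdot\prod_{n\geq 1}(1-q^{2n-1})$, and cancel the common factor $\prod_{n\geq 1}(1-q^{2n})$ between numerator and denominator. What survives is precisely $\prod_{n\geq 1}1/(1-q^{2n-1})$, the second generating function. Comparing the coefficient of $q^n$ on the two sides then yields the equality of the two partition counts, which is the statement of the theorem.

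The only point requiring care is the manipulation of infinitely many factors, namely the rearrangement of the denominator by parity and the cancellation of the even-indexed part. I expect this to be the main (though still mild) obstacle, and I would handle it by working entirely in the ring of formal power series $\mathbb{Z}[[q]]$: modulo $q^{N+1}$ only the factors with $n\leq N$ contribute to any coefficient, so each truncation is a finite product and the cancellation is legitimate term by term, with no appeal to analytic convergence. As an alternative I would note the purely combinatorial proof via Glaisher's bijection, which sends a partition into odd parts to one into distinct parts by encoding each odd part's multiplicity in binary (and splitting each distinct part into its odd core times a power of $2$ to invert the map); this gives an explicit norm-preserving bijection and hence the same conclusion.
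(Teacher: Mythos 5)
Your proof is correct and follows essentially the same route the paper takes: the paper states the identity $(-q)_\infty = 1/(q;q^2)_\infty$ in \eqref{GF_distinct_pts} as ``the analytic version of Theorem~\ref{EulerTHM}'' (citing it as classical rather than proving it), and your argument is precisely the standard Euler derivation of that identity via $1+q^n = (1-q^{2n})/(1-q^n)$ and parity cancellation, with the formal-power-series justification handled properly. Your remark on Glaisher's bijection is a fine alternative but is not needed here.
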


Theorem~\ref{EulerTHM} and many other theorems of the same spirit utilizes generating functions in their proofs. Let $A$ be a set of partitions, and let $p_A(n)$ be the number of partitions in $A$ with norm $n$. Then \begin{equation}\label{GENFUNC}\sum_{\pi\in A} 1 \cdot q^{|\pi|} = \sum_{n\geq 0} p_A(n) q^n\end{equation} is the generating function for the number of partitions with the same norm from the set $A$ written in two separate combinatorial ways, abstract and enumerative respectively. Here it is clear that every partition $\pi \in A$ makes a contribution of one to the $q^{|\pi|}$ term.

We would like to introduce four classically studied sets of partitions. 
\begin{enumerate}[i.]\item Let $\U$ be the set of all (unrestricted) partitions. 
\item Let $\D$ be the set of all partitions into distinct parts. 
\item Let $\R\R_1$ be the set of all partitions with difference between parts $\geq 2$. 
\item Let $\R\R_2$ be the set of all partitions with difference between parts $\geq 2$ where parts are $>1$. \end{enumerate} These listed sets are nested: $\R\R_2\subset\R\R_1\subset \D\subset\U$. The generating functions for the number of partitions from these sets are extensively studied in the literature. 

One can generalize the classical approach of writing abstract generating functions with respect to the norm \eqref{GENFUNC} by attaching weights in the place of 1. In 1997,  Alladi \cite{AlladiWeighted} inquired about the existence and identification of a weight $\omega_S(\pi)$ on a set of partitions $S$ so that \begin{equation}\label{GF_function_abstract}\sum_{\pi\in S} \omega_S(\pi)q^{|\pi|} = \sum_{\pi\in T} q^{|\pi|}\end{equation} for some set of partitions $T$ that contains $S$. He proved the interesting result, which exemplifies the existence of solutions of \eqref{GF_function_abstract}:

\begin{theorem}[Alladi, 1997] \label{Alladi_weighted_sum} Let $\nu(\pi)$ denote the number of parts of $\pi$. Then \begin{equation}\label{omega_12}\sum_{\pi\in \R\R_1} \omega_{1,2}(\pi) q^{|\pi|} = \sum_{\pi\in U} q^{|\pi|}\end{equation}where
\[\omega_{1,2}(\pi) := \lambda_{\nu(\pi)}\cdot\prod_{i=1}^{\nu(\pi)-1} (\lambda_{i}-\lambda_{i+1}-1),\]
and weight of the empty sequence is considered to be the empty product, and is set equal to 1.
\end{theorem}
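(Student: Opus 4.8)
The plan is to evaluate the left-hand side of \eqref{omega_12} directly as a generating function by parametrizing each partition $\pi \in \R\R_1$ through data on which the weight and the norm both factor. Write $k = \nu(\pi)$, let $\lambda_k \ge 1$ be the smallest part, and set $g_i := \lambda_i - \lambda_{i+1}$ for $1 \le i \le k-1$. The condition $\pi \in \R\R_1$ is exactly $g_i \ge 2$, and given $k\ge 0$, any $\lambda_k \ge 1$ and any $g_1,\dots,g_{k-1} \ge 2$ reconstruct $\lambda_i = \lambda_k + \sum_{j=i}^{k-1} g_j$, a strictly decreasing sequence of gaps $\ge 2$. So these ranges are free and independent, and the correspondence is a bijection. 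In these coordinates the weight is $\omega_{1,2}(\pi) = \lambda_k \prod_{i=1}^{k-1}(g_i - 1)$, and a short computation gives $|\pi| = k\lambda_k + \sum_{j=1}^{k-1} j\, g_j$.

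Because the exponent of $q$ and the weight both split across the parameters, I would factor the $k$-part contribution into one-variable sums. Using $\sum_{\lambda \ge 1}\lambda\, q^{k\lambda} = q^k/(1-q^k)^2$ for the smallest part and $\sum_{g\ge 2}(g-1) q^{jg} = q^{2j}/(1-q^j)^2$ for the $j$-th gap factor, the total power of $q$ drawn from the gaps is $\sum_{j=1}^{k-1} 2j = k^2 - k$, which combines with the $q^k$ from the smallest part to produce $q^{k^2}$; the denominators assemble into $(q;q)_k^2$, where $(q;q)_k := \prod_{j=1}^k (1-q^j)$. Summing over $k$ (the empty partition giving the $k=0$ term $1$), the left-hand side collapses to
\[
\sum_{\pi \in \R\R_1} \omega_{1,2}(\pi)\, q^{|\pi|} = \sum_{k \ge 0} \frac{q^{k^2}}{(q;q)_k^2}.
\]

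To finish, I would invoke the Durfee square decomposition of an arbitrary $\pi \in \U$: each such $\pi$ has a unique largest square subdiagram of side $k$, contributing $q^{k^2}$, while the region to its right and the region below it are arbitrary partitions into at most $k$ parts and into parts of size at most $k$, respectively, each generated by $1/(q;q)_k$. Hence
\[
\sum_{\pi \in \U} q^{|\pi|} = \frac{1}{(q;q)_\infty} = \sum_{k\ge 0} \frac{q^{k^2}}{(q;q)_k^2},
\]
and comparing the two displays yields \eqref{omega_12}. The factorization in the middle step is routine algebra; the one genuinely nontrivial ingredient is recognizing that the weighted sum collapses precisely into the Durfee square series. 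Accordingly, the main care lies in setting up the parametrization cleanly and checking that the ranges $\lambda_k \ge 1$ and $g_i \ge 2$ are exactly right, so that it is $\R\R_1$ (with smallest part starting at $1$) and not $\R\R_2$ that matches the series.
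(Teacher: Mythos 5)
Your proposal is correct, and it shares the paper's overall skeleton: group partitions by the number of parts $k$, show that the weighted generating function over $k$-part partitions in $\R\R_1$ equals $q^{k^2}/(q)_k^2$, and finish by summing over $k$ and invoking the Durfee square identity, i.e.\ the first equality of \eqref{GF_unrestricted_pts}. Where you differ is in how the key intermediate identity is established. The paper proves the two-parameter refinement of Theorem~\ref{Finite_Weighted_GF}: for partitions in $\P_M(k,m)$ it decomposes $\pi$ as the minimal staircase $\pi^*$ (of norm $m\binom{M}{2}+kM$) plus a partition whose columns carry one of two colors, counted by $1/(q)_M^2$, and interprets the weight $\omega_{k,m}(\pi)$ as the number of admissible colorings; Alladi's theorem then falls out of Corollary~\ref{Corollary_of_Finite_Weighted_GF} at $(k,m)=(1,2)$, $M\to\infty$. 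Your gap-coordinate computation is, in effect, the analytic transcription of that same decomposition: your factor $1/(1-q^j)^2=\sum_{n\geq 0}(n+1)q^{jn}$ encodes exactly the paper's count of two-colorings of $n$ columns of height $j$ disregarding order, and your exponents $q^k\cdot q^{2\binom{k}{2}}=q^{k^2}$ reproduce the norm of $\pi^*$. What your route buys is a self-contained, mechanical verification requiring no combinatorial interpretation --- the parametrization by $(\lambda_k, g_1,\dots,g_{k-1})$ makes the sum factor term by term, and each factor is a derivative of a geometric series. What the paper's route buys is generality: the same colored-Ferrers argument delivers the full family \eqref{GF_M_parts_general_distance_smallest_part} for all $(k,m)$ and fixed $M$, which the paper then reuses with $(k,m)=(2,2)$ to reach Theorem~\ref{RR2_THM} and the crank connections; your argument would need to be redone (straightforwardly, replacing $\lambda_k\geq 1$ by $\lambda_k\geq 2$) for each such specialization. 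Your closing caution about the ranges $\lambda_k\geq 1$ versus $\lambda_k\geq 2$ is exactly the right check, and all of your series manipulations are correct.
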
 Similar weighted identities and their interesting applications have been discussed \cite{AlladiWeighted}, \cite{AlladiBerkovich}, and \cite{AlladiBerkovich2}.

It should be noted that the relation $T\subset S$ in \eqref{GF_function_abstract} is of little interest. In this case one can define the weight $\omega_S(\pi)$ to be the indicator function \[\omega_S(\pi) := \left\{ \begin{array}{ll}
1, &\text{if }\pi\in T,\\
0,&\text{otherwise.}
\end{array} \right.\]

Our main motivation lies in the similar question to the one of Alladi's. We would like to identify statistics $\Lambda$ such that for sets of partitions $S\subset T$ we have \begin{equation}\label{GF_abstract_EXP_Weight}\sum_{\pi\in S}q^{\Lambda(\pi)}=\sum_{\pi\in T} q^{|\pi|}.\end{equation}Later we prove the following result:

\begin{theorem}\label{AliTHM1}
\[\sum_{\pi\in \D} q^{\mathcal{O}(\pi)} = \sum_{\pi \in \U} q^{|\pi|},\]
where $\mathcal{O}(\pi) := \lambda_1 + \lambda_3+\dots$, the sum of the odd indexed parts, for a partition $\pi=(\lambda_1,\lambda_2,\dots)$.
\end{theorem}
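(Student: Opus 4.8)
The plan is to encode each partition into distinct parts by its successive differences and thereby turn the left-hand side into an explicit $q$-series. Given $\pi=(\lambda_1>\lambda_2>\dots>\lambda_m)\in\D$, set $d_j=\lambda_j-\lambda_{j+1}$ for $1\le j<m$ and $d_m=\lambda_m$; distinctness is exactly the condition $d_j\ge 1$ for all $j$, and $\lambda_i=\sum_{j\ge i}d_j$. Interchanging the order of summation gives
\[\mathcal{O}(\pi)=\sum_{i\text{ odd}}\lambda_i=\sum_{j=1}^m \Big\lceil \tfrac{j}{2}\Big\rceil\, d_j,\]
so that, summing the independent geometric series in each $d_j\ge1$ and over the number of parts $m\ge0$,
\[\sum_{\pi\in\D}q^{\mathcal{O}(\pi)}=\sum_{m\ge0}\prod_{j=1}^m\frac{q^{\lceil j/2\rceil}}{1-q^{\lceil j/2\rceil}}.\]

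Next I would exploit that $\lceil j/2\rceil$ takes each value $r\ge1$ exactly twice, at $j=2r-1$ and $j=2r$. Splitting the outer sum according to the parity of $m$ and writing $(q;q)_k=\prod_{r=1}^k(1-q^r)$, the even terms $m=2k$ contribute $q^{k(k+1)}/(q;q)_k^2$, while the odd terms $m=2k+1$ carry an extra factor $q^{k+1}/(1-q^{k+1})$. Since $1+q^{k+1}/(1-q^{k+1})=1/(1-q^{k+1})$, the even and odd contributions for each $k$ combine into a single term, giving
\[\sum_{\pi\in\D}q^{\mathcal{O}(\pi)}=\sum_{k\ge0}\frac{q^{k(k+1)}}{(q;q)_k\,(q;q)_{k+1}}.\]

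Finally I would recognise the right-hand side as the Durfee-rectangle expansion of $1/(q;q)_\infty$ associated to a $k\times(k+1)$ rectangle: for each partition $\sigma$ one extracts the largest $k$ with $\sigma_k\ge k+1$, peels off the $k(k+1)$ cells of that rectangle, and is left with an arbitrary partition into at most $k$ parts (to its right) together with an arbitrary partition with parts $\le k+1$ (below it), which account for the factors $1/(q;q)_k$ and $1/(q;q)_{k+1}$ respectively. Hence the series equals $\sum_{\sigma\in\U}q^{|\sigma|}$, as claimed. The geometric summation is routine; the two points that require care are the parity bookkeeping that produces the clean exponent $k(k+1)$, and, above all, identifying the resulting series with the width-one Durfee-rectangle identity (equivalently, supplying its short combinatorial proof), which is where the real content of the argument sits.
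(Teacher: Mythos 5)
Your proposal is correct, and it takes a genuinely different route from the paper. You encode a partition into distinct parts by its successive differences $d_j\geq 1$, observe that $\mathcal{O}(\pi)=\sum_{j}\lceil j/2\rceil d_j$, sum the independent geometric series, and merge the even and odd part-counts via $1+q^{k+1}/(1-q^{k+1})=1/(1-q^{k+1})$ to reach $\sum_{k\geq 0}q^{k(k+1)}/\left((q;q)_k(q;q)_{k+1}\right)$, which you then identify with $1/(q;q)_\infty$ by the $k\times(k+1)$ Durfee-rectangle dissection; all of these steps check out, including the maximality condition $\sigma_{k+1}\leq k+1$ that makes the dissection a bijection. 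The paper proceeds differently: Theorem~\ref{AliTHM1} is deduced from Theorem~\ref{Weight_Change_Odd_Indexed_Parts_with_number_of_parts_known}, where the projection of $\pi\in\D_{2l+v}$ onto its odd-indexed parts lands in $\P_{l+v}(2-v,2)$ and the preimages are counted by the weights $\omega_{2,2}$ and $\tilde{\omega}_1$; since $\omega_{1,2}=\omega_{2,2}+\tilde{\omega}_1$ on $\R\R_1$, summing over all $l$ and $v$ reduces the claim to Alladi's weighted identity (Theorem~\ref{Alladi_weighted_sum}, itself proved via the two-colored column construction of Theorem~\ref{Finite_Weighted_GF}) together with the Durfee-square expansion $\sum_{n\geq 0}q^{n^2}/(q)_n^2=1/(q)_\infty$; the paper also notes a second proof via Boulet's formula, namely $\Psi(q,q,1,1)$. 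It is worth remarking that your even-$m$ subtotal $\sum_{k\geq 0}q^{k(k+1)}/(q)_k^2$ is precisely the paper's intermediate series in \eqref{Half_of_RR2_Crank_result}, so the two arguments shadow each other at the level of $q$-series; what distinguishes them is that your difference-encoding is elementary and self-contained, needing neither the weighted-partition machinery nor Boulet's theorem, while the paper's heavier route buys the refinement by number of parts and the weight identities that feed directly into the crank results (Theorems~\ref{RR2_THM} and~\ref{Tilde_Weights_THEOREM}), which a collapsed computation like yours does not expose.
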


Similar to the problem of identifying weights, the case $T \subset S$ is trivial since one can formally pick \[\Lambda(\pi)=\left\{ \begin{array}{ll}
|\pi|, &\text{if }\pi\in T,\\
\infty, &\text{otherwise,}
\end{array} \right. \]
where we assume $|q|<1$.

For $i\in\{1,2\}$, identifying the weights $\omega_i(\pi)$, the partition statistics $\Lambda_i$, and sets of partitions (or vector partitions) $S$ and $T$ that satisfy
\begin{equation}
\label{Constant_Weights_Def}\sum_{\pi\in S}\omega_1(\pi) q^{\Lambda_1(\pi)} = \sum_{\pi\in T} \omega_2(\pi)q^{\Lambda_2(\pi)} \\
\end{equation} is an enveloping generalization of the mentioned questions related with \eqref{GF_function_abstract} and \eqref{GF_abstract_EXP_Weight}. This general question reduces to the classical combinatorial study of partition identities for $\omega_i(\pi)\equiv 1$ and $\Lambda_i(\pi)\equiv |\pi|$ with sets of partitions $S$ and $T$. One example of this particular case is Theorem~\ref{EulerTHM}. 

In Section~\ref{Section2} we define $q$-Pochhammer symbols, and the Ferrers diagrams. We also remark some well-known results for completeness of the paper. Section~\ref{Section3} has the refinement and a proof of Theorem~\ref{Alladi_weighted_sum}. The crank of a partition and its relation with both the weighted identities and different partition statistics is given in Section~\ref{Section4}. Section~\ref{Section5} is devoted for a short excursion of writing generating functions with respect to the partition statistics, sum of the odd-indexed parts of a partition.

\section{Some Basics of Partition Theory}\label{Section2}

The Ferrers diagram of a partition $\pi=(\lambda_1,\lambda_2,\dots)$ is a graphical representation of the parts of $\pi$, \cite{Theory of Partitions}, where we put $\lambda_i$ many dots in the integral coordinates on the $i$-th row from the top of the diagram to represent this part. Two examples of such representations are the Ferrers diagrams of $(4,4,2,1,1)$ and $(5,3,2,2)$ respectively:

\begin{center}
\begin{tabular}{cc}
\definecolor{zzzzzz}{rgb}{0.6,0.6,0.6}
\definecolor{qqqqff}{rgb}{0,0,1}
\definecolor{cqcqcq}{rgb}{0.75,0.75,0.75}
\begin{tikzpicture}[line cap=round,line join=round,>=triangle 45,x=.5cm,y=.5cm]
\draw [color=cqcqcq,dash pattern=on 1pt off 1pt, xstep=0.5cm,ystep=.5cm] (1,-5.1) grid (4.5,-1);
\clip(0.9,-5.1) rectangle (6.5,-0.9);
\draw [dotted,color=zzzzzz,domain=0.9:4.5] plot(\x,{(-0-1*\x)/1});
\begin{scriptsize}
\fill [color=qqqqff] (1,-1) circle (1.5pt);
\fill [color=qqqqff] (2,-1) circle (1.5pt);
\fill [color=qqqqff] (3,-1) circle (1.5pt);
\fill [color=qqqqff] (4,-1) circle (1.5pt);
\fill [color=qqqqff] (1,-2) circle (1.5pt);
\fill [color=qqqqff] (2,-2) circle (1.5pt);
\fill [color=qqqqff] (3,-2) circle (1.5pt);
\fill [color=qqqqff] (4,-2) circle (1.5pt);
\fill [color=qqqqff] (1,-3) circle (1.5pt);
\fill [color=qqqqff] (2,-3) circle (1.5pt);
\fill [color=qqqqff] (1,-4) circle (1.5pt);
\fill [color=qqqqff] (1,-5) circle (1.5pt);
\draw (6,-3) node[anchor=center] {,};
\end{scriptsize}
\end{tikzpicture} &
\definecolor{zzzzzz}{rgb}{0.6,0.6,0.6}
\definecolor{qqqqff}{rgb}{0,0,1}
\definecolor{cqcqcq}{rgb}{0.75,0.75,0.75}
\begin{tikzpicture}[line cap=round,line join=round,>=triangle 45,x=0.5cm,y=0.5cm]
\draw [color=cqcqcq,dash pattern=on 1pt off 1pt, xstep=0.5cm,ystep=0.5cm] (1,-5.1) grid (5.1,-1);
\clip(0.9,-5.1) rectangle (6.1,-0.9);
\draw [dotted,color=zzzzzz,domain=0.9:5.1] plot(\x,{(-0-1*\x)/1});
\begin{scriptsize}
\fill [color=qqqqff] (1,-1) circle (1.5pt);
\fill [color=qqqqff] (2,-1) circle (1.5pt);
\fill [color=qqqqff] (3,-1) circle (1.5pt);
\fill [color=qqqqff] (4,-1) circle (1.5pt);
\fill [color=qqqqff] (1,-2) circle (1.5pt);
\fill [color=qqqqff] (2,-2) circle (1.5pt);
\fill [color=qqqqff] (3,-2) circle (1.5pt);
\fill [color=qqqqff] (1,-3) circle (1.5pt);
\fill [color=qqqqff] (2,-3) circle (1.5pt);
\fill [color=qqqqff] (1,-4) circle (1.5pt);
\fill [color=qqqqff] (2,-4) circle (1.5pt);
\fill [color=qqqqff] (5,-1) circle (1.5pt);
\end{scriptsize}
\draw (6,-3) node[anchor=center] {.};
\end{tikzpicture}
\end{tabular}
\end{center}

We note that taking the symmetric images of points in the Ferrers diagram over the (main) diagonal line gives us a Ferrers diagram of a partition. Two partitions whose Ferrers diagrams that are related by symmetry over the main diagonal are said to be \textit{conjugate} of each other. In our example  $(4,4,2,1,1)$ and $(5,3,2,2)$ are conjugate partitions. 

One should also stress that the partitions can be identified by their Ferrers diagrams and vice versa. From now on we will be using the notation $\pi$ for a partition or a partition's Ferrers diagram interchangeably.  

For the product representations of generating functions of interest we define the $q$-Pochhammer symbols \cite{Theory of Partitions}. Let $L$, and $k$ be non-negative integers. The $q$-Pochhammer symbol is
\begin{align*}
	(a)_L:=(a;q)_L &:= \prod_{n=0}^{L-1}(1-aq^n).\\
	\intertext{Some abbreviations of the notation we are going to use are}
	(a_1,a_2,\dots,a_k;q)_L &:= (a_1;q)_L(a_2;q)_L\dots (a_k;q)_L,\\
	(a;q)_\infty &:= \lim_{L\rightarrow\infty} (a;q)_L,\text{ where }|q|<1.
\end{align*}

With these definitions we can write explicit formulas of the generating functions on the defined sets in multiple ways. 

\begin{align}
\label{GF_unrestricted_pts}\sum_{\pi\in \U} q^{|\pi|} &=\sum_{n\geq 0} \frac{q^{n^2}}{(q)^2_n}=\frac{1}{(q)_\infty},\\
\label{GF_distinct_pts}\sum_{\pi\in \D} q^{|\pi|} &=(-q)_\infty = \frac{1}{(q;q^2)_\infty},\\
\label{RR1_identity}\sum_{\pi\in \R\R_1} q^{|\pi|} &=  \sum_{n\geq0} \frac{q^{n^2}}{(q)_n} = \frac{1}{(q,q^4;q^5)_\infty},\\
\label{RR2_identity}\sum_{\pi\in \R\R_2} q^{|\pi|} &= \sum_{n\geq0} \frac{q^{n^2+n}}{(q)_n} = \frac{1}{(q^2,q^3;q^5)_\infty}.
\end{align}

Second equality of \eqref{GF_distinct_pts} is the analytic version of Theorem~\ref{EulerTHM}. The extreme right equality of \eqref{RR1_identity} and \eqref{RR2_identity} are the well-celebrated Rogers--Ramanujan identities. Let $\widehat{C}_1$ (and $\widehat{C}_2$) be the set of all the partitions into parts $\equiv \pm1\mod{5}$ (and $\equiv \pm 2\mod{5}$). Hence, for $i\in\{1,2\}$
\begin{equation}
\label{RR_combinatorics_example} \sum_{\pi\in \R\R_i} q^{|\pi|} = \sum_{\pi\in \widehat{C}_i} q^{|\pi|}. \\
\end{equation}
This is a classical example of \eqref{Constant_Weights_Def}. Moreover, \eqref{RR_combinatorics_example} can be equivalently stated in its enumerative form \cite{Theory of Partitions}:
\begin{theorem}[Rogers-Ramanujan, 1919] \label{RR_combinatorial_THM}Let $n$ be any non-negative integer. The number of partitions of $n$ into parts with minimal distance of 2 between consecutive parts (where the smallest part $\not=1$) is equal to the number of partitions of $n$ into parts $\equiv \pm1\mod{5}$ (and $\equiv \pm 2\mod{5}$).
\end{theorem}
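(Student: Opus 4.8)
The plan is to read the enumerative statement directly off the two analytic identities already recorded in \eqref{RR1_identity} and \eqref{RR2_identity}; the only genuine work is to reinterpret their product sides as generating functions for $\widehat{C}_1$ and $\widehat{C}_2$ and then to compare coefficients of $q^n$. Fix $i\in\{1,2\}$, and let $p_{\R\R_i}(n)$ and $p_{\widehat{C}_i}(n)$ denote, as in \eqref{GENFUNC}, the number of partitions of $n$ lying in $\R\R_i$ and in $\widehat{C}_i$ respectively; the theorem asserts $p_{\R\R_i}(n)=p_{\widehat{C}_i}(n)$ for every $n\ge 0$.

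First I would expand the product sides. For $i=1$ write
\[\frac{1}{(q,q^4;q^5)_\infty}=\prod_{k\ge 0}\frac{1}{(1-q^{5k+1})(1-q^{5k+4})},\]
and expand each factor as a geometric series $1/(1-q^m)=\sum_{j\ge 0}q^{jm}$. Collecting terms, the coefficient of $q^n$ counts the number of ways of writing $n$ as an unordered sum of parts each congruent to $1$ or $4$, i.e.\ to $\pm 1$, modulo $5$; this is precisely $p_{\widehat{C}_1}(n)$. Hence $1/(q,q^4;q^5)_\infty=\sum_{\pi\in\widehat{C}_1}q^{|\pi|}$, and the identical argument with parts $\equiv 2,3\equiv\pm 2\pmod 5$ gives $1/(q^2,q^3;q^5)_\infty=\sum_{\pi\in\widehat{C}_2}q^{|\pi|}$. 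This is exactly the combinatorial reading of the product sides that produces \eqref{RR_combinatorics_example}.

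Next I would chain the equalities. The left-hand members of \eqref{RR1_identity} and \eqref{RR2_identity} are by definition $\sum_{\pi\in\R\R_i}q^{|\pi|}=\sum_{n\ge 0}p_{\R\R_i}(n)q^n$ (the intermediate sum-sides encode the usual staircase decomposition of a gap-$\ge 2$ partition, but we need only the outer equality here). Combining this with the product interpretation above yields the identity $\sum_{n\ge 0}p_{\R\R_i}(n)q^n=\sum_{n\ge 0}p_{\widehat{C}_i}(n)q^n$. Comparing the coefficient of $q^n$ on both sides gives $p_{\R\R_i}(n)=p_{\widehat{C}_i}(n)$, as claimed.

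The genuinely deep ingredient, namely the right-hand equalities of \eqref{RR1_identity} and \eqref{RR2_identity} equating the Rogers--Ramanujan sums with the infinite products, is exactly what we are entitled to assume, so it poses no obstacle here. The only points requiring care are routine: justifying the expansion of the infinite product (either by absolute convergence for $|q|<1$ or, more cleanly, by working in the formal power series ring $\ZBbb[[q]]$, where every series involved has non-negative integer coefficients), and legitimizing term-by-term coefficient comparison, which is automatic once both sides are regarded as elements of $\ZBbb[[q]]$. Thus the whole theorem reduces to product-side bookkeeping, the hardest part being simply to confirm that each partition in $\widehat{C}_i$ is counted exactly once by the expanded product.
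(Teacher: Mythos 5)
Your proposal is correct and matches the paper's treatment: the paper also derives this enumerative statement directly from the recorded analytic identities \eqref{RR1_identity} and \eqref{RR2_identity}, interpreting the product sides as the generating functions for $\widehat{C}_1$ and $\widehat{C}_2$ (this is exactly \eqref{RR_combinatorics_example}) and reading off equality of coefficients. Your added care about working in $\ZBbb[[q]]$ and about each $\widehat{C}_i$-partition being counted once in the expanded product is sound but routine, and the paper, like you, takes the deep product--sum equalities as given (citing the classical literature).
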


\section{Partition Identities Involving Weights}\label{Section3}

Let $M$, $k-1$, and $m$ be non-negative integers. Let $\P_M(k,m)$ be the set of partitions into exactly $M$ parts with the smallest part $\geq k$ and the gap between consecutive parts $\geq m$. We let $\P_0(k,m)$ be the set that contains the partition of zero, the empty sequence. The sets $\P_M(k,m)$ are mutually disjoint for distinct integers $M$ and satisfy the properties $\P_M(k,m+1)\subset \P_M(k,m)$ and $\P_M(k+1,m)\subset \P_M(k,m)$. For brevity we define the short-hand notation 
\begin{equation}\label{larger_than_or_equal_to_M_Sets}\P_{\leq M} (k,m) = \bigcup_{l=0}^M \P_l(k,m).
\end{equation}
Clearly we have $\U=\lim_{M\rightarrow\infty}\P_{\leq M}(1,0)$, $\D=\lim_{M\rightarrow\infty}\P_{\leq M}(1,1)$, and $\R\R_i = \lim_{M\rightarrow\infty}\P_{\leq M} (i,2)$ for $i\in\{1,2\}$.

\begin{theorem}\label{Finite_Weighted_GF} For a partition $\pi = (\lambda_1,\lambda_2,\dots)$, 
\begin{equation}
\label{GF_M_parts_general_distance_smallest_part}\sum_{\pi\in \P_M(k,m)} \omega_{k,m}(\pi) q^{|\pi|} = \frac{q^{m{M\choose2}+kM}}{(q)_M^2},
\end{equation}
where 
\begin{equation}
\label{General_Weight} \omega_{k,m}(\pi) :=(\lambda_{M}+1-k)\cdot\prod_{i=1}^{M-1} (\lambda_{i}-\lambda_{i+1}+1-m).
\end{equation} 
\end{theorem}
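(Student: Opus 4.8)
The plan is to parametrize $\P_M(k,m)$ by the ``excesses'' beyond the minimal admissible configuration, and thereby factor the weighted generating function into a product of one-variable series, each summable in closed form.

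First I would introduce non-negative integer coordinates recording how much each defining constraint is exceeded. For $\pi = (\lambda_1,\dots,\lambda_M)\in\P_M(k,m)$, set $g_M := \lambda_M - k$ and $g_i := \lambda_i - \lambda_{i+1} - m$ for $1\leq i\leq M-1$. The defining inequalities (smallest part $\geq k$ and consecutive gaps $\geq m$) are exactly the conditions $g_1,\dots,g_M\geq 0$, and since $k-1$ and $m$ are non-negative one checks that $\pi\mapsto(g_1,\dots,g_M)$ is a bijection from $\P_M(k,m)$ onto $\mathbb{Z}_{\geq 0}^M$ (the inverse reconstructs the $\lambda_i$ by accumulating the excesses). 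Under this change of variables the weight \eqref{General_Weight} becomes transparent: $\omega_{k,m}(\pi) = (g_M+1)\prod_{i=1}^{M-1}(g_i+1) = \prod_{l=1}^M(g_l+1)$.

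Next I would rewrite the norm in the new coordinates. Telescoping the gap relations yields $\lambda_i = k + (M-i)m + \sum_{l=i}^M g_l$, and summing over $i$ collects each $g_l$ with multiplicity $l$, so that $|\pi| = kM + m\binom{M}{2} + \sum_{l=1}^M l\,g_l$. The constant term $kM + m\binom{M}{2}$ factors out of the generating function as the prefactor $q^{m\binom{M}{2}+kM}$, matching the numerator on the right-hand side of \eqref{GF_M_parts_general_distance_smallest_part}. With the weight and the remaining part of the norm both split across the index $l$, the weighted sum factors completely:
\[
\sum_{\pi\in\P_M(k,m)} \omega_{k,m}(\pi)\, q^{|\pi|} = q^{m\binom{M}{2}+kM}\prod_{l=1}^{M}\left(\sum_{g\geq 0}(g+1)\,q^{lg}\right).
\]

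Each inner sum is evaluated by the elementary identity $\sum_{g\geq 0}(g+1)x^g = (1-x)^{-2}$ with $x = q^l$, giving $\prod_{l=1}^M(1-q^l)^{-2} = (q)_M^{-2}$ and hence the claimed formula; the edge case $M=0$ reduces to $1=1$ via empty products. The one genuinely delicate point is the bookkeeping in the second step: one must verify that summing the telescoped expressions for the $\lambda_i$ produces \emph{precisely} the exponent $m\binom{M}{2}+kM$ together with the linear weights $l\,g_l$, since an off-by-one in the multiplicities would spoil the clean factorization into $1/(q)_M^2$. Everything else is routine manipulation of a geometric-type series.
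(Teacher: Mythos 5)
Your proof is correct, and while it shares the paper's opening move, it finishes by a different mechanism. Both arguments begin by splitting off the minimal ``staircase'' partition $\pi^* = ((M-1)m+k,\dots,m+k,k)$, whose norm accounts for the prefactor $q^{m\binom{M}{2}+kM}$. The paper then argues combinatorially: it interprets $1/(q)_M^2$ as the generating function for partitions into at most $M$ parts whose columns each carry one of two colors (counted disregarding order), adds such a partition pointwise to $\pi^*$, and shows that the number of colorings collapsing onto a fixed $\pi\in\P_M(k,m)$ is exactly $(\lambda_M+1-k)\prod_{i=1}^{M-1}(\lambda_i-\lambda_{i+1}+1-m)$ --- so the weight $\omega_{k,m}$ is explained as the fiber count of a forgetful (color-erasing) map. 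You instead coordinatize: the bijection $\pi\mapsto(g_1,\dots,g_M)\in\mathbb{Z}_{\geq0}^M$, the identities $\omega_{k,m}(\pi)=\prod_{l=1}^M(g_l+1)$ and $|\pi|=kM+m\binom{M}{2}+\sum_{l=1}^M l\,g_l$ (your multiplicity bookkeeping is right, since $g_l$ appears in $\lambda_i$ precisely when $i\leq l$), and the elementary evaluation $\sum_{g\geq0}(g+1)q^{lg}=(1-q^l)^{-2}$ do all the work. The two proofs are shadows of one another: your excess coordinate $g_l$ is the number of columns of height $l$ in the excess partition $\pi-\pi^*$, and your factor $(g_l+1)$ is exactly the paper's count of two-colorings of those columns disregarding order. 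What your route buys is self-containedness --- no colored-partition interpretation is needed, and the fiber-counting step becomes a transparent change of variables followed by a geometric-type series; what the paper's route buys is a combinatorial explanation of \emph{why} the weight $\omega_{k,m}$ arises, which is the conceptual thread it reuses in the later weighted identities.
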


\begin{proof} The $M=0$ case is obvious. Let $M$ and $k$ be positive and $m$ be non-negative. A combinatorially interpretation of $q^{m{M\choose 2}}$ is that it is the generating function for the partition $\pi_1:=((M-1)m,(M-2)m,\dots,2m,m)$. The partition $\pi_1$ is into $M-1$ distinct parts when $m$ and $M-1$ are non-zero and it is the empty partition otherwise. The $q^{kM}$ term is interpreted as the partition $\pi_2$ into $M$ parts each equal to $k$. 

Point-wise addition of two partitions can be defined as putting the $i$-th rows of the Ferrers diagrams back to back. The empty partition is the identity element of the defined point-wise addition. This operation on partitions yields new partitions. Point-wise addition of $\pi_1$ and $\pi_2$ gives $\pi^* := ((M-1)m+k,(M-2)m+k,\dots,2m+k,m+k,k)$. 

The partition $\pi^*$ has smallest norm satisfying the properties of $P_M(k,m)$. We consider this smallest partition to be colorless. As a generating function, \begin{equation}\label{1/(q)^2_M}\frac{1}{(q)^2_M}\end{equation} keeps count of the partitions into $\leq M$ parts  where every column in the Ferrers diagram can come in one of two colors counted disregarding the order of these colors. Adding a partition $\pi'$ counted by \eqref{1/(q)^2_M} with $\pi^*$ point-wise we get a partition $\pi\ = (\lambda_1,\lambda_2,\dots, \lambda_{M-1},\lambda_M) \in \P_M(k,m)$.

There are $(\lambda_1-\lambda_2 + 1-m)$ many color combinations for the colored portion of the first part of $\pi$, $\lambda_1$, $(\lambda_2-\lambda_3+1-m)$ for colored portion of $\lambda_2$, and so on. The colored piece of $\lambda_M$ comes in $\lambda_M+1-k$ many possible color combinations disregarding the order of colors. Therefore, there is a total of \[(\lambda_{M}+1-k)\cdot\prod_{i=1}^{M-1} (\lambda_{i}-\lambda_{i+1}+1-m)\] possibilities for the partition $\pi\in\P_M(k,m)$ that are counted by the generating function on the right-hand side of \eqref{GF_M_parts_general_distance_smallest_part}.
\end{proof}

\begin{corollary}\label{Corollary_of_Finite_Weighted_GF} For $\label{general_weight}\omega_{k,m}(\pi) $ as in \eqref{General_Weight}
\begin{equation}\label{Corollary_of_Finite_Weighted_GF_EQN}
\sum_{\pi\in \P_{\leq M}(k,m)} \omega_{k,m}(\pi) q^{|\pi|} = \sum_{i=0}^{M} \frac{q^{m{i\choose2}+ki}}{(q)_i^2}.
\end{equation}
\end{corollary}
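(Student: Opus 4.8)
The plan is to exploit the disjoint decomposition \eqref{larger_than_or_equal_to_M_Sets} of $\P_{\leq M}(k,m)$ together with Theorem~\ref{Finite_Weighted_GF}, which already evaluates the weighted generating function over each fixed layer $\P_l(k,m)$. Since the excerpt records that the sets $\P_l(k,m)$ are mutually disjoint for distinct $l$, summing any weight against $q^{|\pi|}$ over the union is simply the sum of the contributions of the individual layers, and the corollary becomes a bookkeeping consequence of the theorem.

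First I would pin down the reading of the weight, which is the only genuine subtlety. The quantity $\omega_{k,m}(\pi)$ in \eqref{General_Weight} is written with the symbol $M$, but for a partition $\pi$ having exactly $l$ parts the relevant ``$M$'' is its number of parts $\nu(\pi)=l$: the factor $(\lambda_M+1-k)$ is the contribution of the smallest part, and the product runs over the consecutive gaps $\lambda_i-\lambda_{i+1}$. Thus on the layer $\P_l(k,m)$ the weight is $\omega_{k,m}(\pi)=(\lambda_l+1-k)\prod_{i=1}^{l-1}(\lambda_i-\lambda_{i+1}+1-m)$, which is precisely the summand appearing in \eqref{GF_M_parts_general_distance_smallest_part} with $M$ taken to be $l$. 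Once this interpretation is fixed, the weight is well defined on all of $\P_{\leq M}(k,m)$ and agrees layer-by-layer with Theorem~\ref{Finite_Weighted_GF}.

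With that in place I would simply write
\[\sum_{\pi\in\P_{\leq M}(k,m)}\omega_{k,m}(\pi)q^{|\pi|}=\sum_{l=0}^{M}\sum_{\pi\in\P_l(k,m)}\omega_{k,m}(\pi)q^{|\pi|}=\sum_{l=0}^{M}\frac{q^{m{l\choose2}+kl}}{(q)_l^2},\]
where the first equality is the disjoint decomposition \eqref{larger_than_or_equal_to_M_Sets} and the second applies Theorem~\ref{Finite_Weighted_GF} with $M$ replaced by $l$ to each layer. Renaming the summation index $l$ as $i$ yields exactly the right-hand side of \eqref{Corollary_of_Finite_Weighted_GF_EQN}.

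I expect no serious obstacle here, as the statement is an immediate reorganization of the already-proved theorem; the single point requiring care is the reading of $M$ as $\nu(\pi)$ in the summand, so that the one symbol $\omega_{k,m}$ denotes a consistent weight across layers with differing part-counts.
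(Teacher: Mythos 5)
Your proof is correct and coincides with the paper's own (implicit) argument: the corollary is stated there without proof precisely because it follows by summing \eqref{GF_M_parts_general_distance_smallest_part} of Theorem~\ref{Finite_Weighted_GF} over the disjoint layers $\P_l(k,m)$, $0\leq l\leq M$, from the decomposition \eqref{larger_than_or_equal_to_M_Sets}, exactly as you do. Your preliminary remark that the symbol $M$ in \eqref{General_Weight} must be read as the number of parts $\nu(\pi)$ on each layer is the right reading of the paper's notation and resolves the only ambiguity in the statement.
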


Letting $M\rightarrow\infty$  with $(k,m)=(1,2)$ in Corollary~\ref{Corollary_of_Finite_Weighted_GF} proves Theorem~\ref{Alladi_weighted_sum} by the first equality of \eqref{GF_unrestricted_pts}. We also note that Theorem~\ref{Finite_Weighted_GF} is the refinement of the finite analogue of Theorem~\ref{Alladi_weighted_sum}, \cite[Thorem~3]{AlladiWeighted}, which connects the Durfee square sizes and the number of parts of partitions of the first Rogers--Ramanujan type, $\R\R_1$. Letting $(k,m)=(1,2)$ in Theorem~\ref{Finite_Weighted_GF} proves \cite[Thorem~3]{AlladiWeighted}.

An interesting connection with the classical study of partitions comes from the choice of $(k,m)=(2,2)$ and letting $M\rightarrow\infty$ in Corollary~\ref{Corollary_of_Finite_Weighted_GF}. 

\section{Connections with the crank}\label{Section4}

In 1988, Andrews and Garvan  \cite{FrankCrank} found an explanation of the crank of an ordinary partition $\pi$. Explicitly, the crank of a partition is defined as \[cr(\pi):=\left\{ \begin{array}{cl}
\text{largest part of }\pi, &\text{if }1\text{ is not a part of }\pi,\\
\#\text{ of parts larger than }\#\text{ of }1\text{s} - \#\text{ of }1\text{s in }\pi, & \text{otherwise}.
\end{array}\right.\] Let $\C_{=M}$, $\C_{\leq M}$, and $\C_{\geq M}$ be the sets of partitions with crank $=M$, $\leq M$, and $\geq M$ respectively. We have

\begin{theorem} \label{RR2_THM}
\begin{equation}\label{omega_22}\sum_{\pi\in \R\R_2} \omega_{2,2}(\pi) q^{|\pi|} = \sum_{\pi \in \C_{\geq 0}} q^{|\pi|}.\end{equation}
\end{theorem}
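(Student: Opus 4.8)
The plan is to reduce both sides to explicit $q$-series and identify them. For the left-hand side I would apply Corollary~\ref{Corollary_of_Finite_Weighted_GF} with $(k,m)=(2,2)$ and let $M\to\infty$. Since $\R\R_2=\lim_{M\to\infty}\P_{\leq M}(2,2)$ and $2\binom{i}{2}+2i=i^2+i$, the right-hand side of \eqref{Corollary_of_Finite_Weighted_GF_EQN} collapses to
\[\sum_{\pi\in\R\R_2}\omega_{2,2}(\pi)q^{|\pi|}=\sum_{i\geq 0}\frac{q^{i^2+i}}{(q)_i^2}.\]
Thus Theorem~\ref{RR2_THM} is equivalent to the assertion that this series is the generating function for partitions with non-negative crank.

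For the crank side I would use the Andrews--Garvan crank generating function \cite{FrankCrank}, written as $F(z,q):=\sum_{\pi}z^{cr(\pi)}q^{|\pi|}=\dfrac{(q)_\infty}{(zq)_\infty(q/z)_\infty}=\sum_{m}C_m(q)z^m$, where $C_m(q)$ collects the partitions of crank $m$. The product is invariant under $z\mapsto z^{-1}$, so $C_m=C_{-m}$ and summing the non-negative powers of $z$ gives
\[\sum_{\pi\in\C_{\geq 0}}q^{|\pi|}=\sum_{m\geq 0}C_m(q)=\tfrac12\Bigl(F(1,q)+C_0(q)\Bigr)=\tfrac12\Bigl(\frac{1}{(q)_\infty}+C_0(q)\Bigr).\]
Here I would pause on the single convention-dependent value at $n=1$: although the individual coefficients read off from $F$ at $q^1$ are not the literal crank counts, their sum over $m\geq 0$ still equals the true number of partitions of $1$ with non-negative crank (namely $0$), so the displayed identity is correct as stated. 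To evaluate the constant term I would expand $\frac{1}{(zq)_\infty}=\sum_{i\geq 0}\frac{z^iq^i}{(q)_i}$ and $\frac{1}{(q/z)_\infty}=\sum_{j\geq 0}\frac{z^{-j}q^j}{(q)_j}$ by Euler's theorem and keep the diagonal $i=j$, obtaining $C_0(q)=(q)_\infty\sum_{i\geq 0}\frac{q^{2i}}{(q)_i^2}$.

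Combining the two computations reduces Theorem~\ref{RR2_THM} to the purely analytic identity
\[2\sum_{i\geq 0}\frac{q^{i^2+i}}{(q)_i^2}=\frac{1}{(q)_\infty}+(q)_\infty\sum_{i\geq 0}\frac{q^{2i}}{(q)_i^2},\]
where on the right I would rewrite $\frac{1}{(q)_\infty}=\sum_{i\geq 0}\frac{q^{i^2}}{(q)_i^2}$ via the Durfee square identity in \eqref{GF_unrestricted_pts}. This last equality is the main obstacle: it compares a Durfee-rectangle sum against a Durfee-square sum twisted by the factor $(q)_\infty$, and it cannot be obtained from the crank symmetry alone (that route is circular, since the symmetry only re-expresses $\sum_{\pi\in\C_{\geq 0}}q^{|\pi|}$ in terms of $C_0$). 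I expect to establish it by standard $q$-series machinery---either inserting a suitable Bailey pair into Bailey's lemma so that the two normalizations $(q)_i^2$ and $(q)_i(q)_{i+1}$ can be matched, or by a finite-$N$ recurrence/$q$-difference argument with $M\to\infty$. As a consistency check I would expand both sides to several orders (each begins $1+q^2+2q^3+3q^4+\cdots$), which simultaneously pins down the normalization and confirms that the $n=1$ bookkeeping has been handled correctly.
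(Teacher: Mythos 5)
Your reductions are all correct: the specialization of Corollary~\ref{Corollary_of_Finite_Weighted_GF} at $(k,m)=(2,2)$ with $M\to\infty$, the symmetry $C_m=C_{-m}$ of the Andrews--Garvan generating function, the diagonal extraction $C_0(q)=(q)_\infty\sum_{i\geq 0}q^{2i}/(q)_i^2$, and your handling of the $n=1$ crank anomaly (the formal coefficients $M(0,1)=-1$, $M(\pm 1,1)=1$ do sum correctly over $m\geq 0$). But the argument is not a proof, because you stop exactly at the step that carries all the content: the identity
\begin{equation*}
2\sum_{i\geq 0}\frac{q^{i^2+i}}{(q)_i^2}=\frac{1}{(q)_\infty}+(q)_\infty\sum_{i\geq 0}\frac{q^{2i}}{(q)_i^2}
\end{equation*}
is announced as something you ``expect to establish by standard $q$-series machinery,'' with only a numerical check to low order. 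Naming Bailey's lemma or a $q$-difference argument as candidate tools, without exhibiting the Bailey pair or the recurrence, leaves the theorem unproven; everything before this point is routine bookkeeping, and this identity \emph{is} the theorem.

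The gap is closeable, and it is instructive to see how the paper closes it: instead of symmetrizing down to $C_0$, the paper cites Auluck's identity (equations (3) and (11) of \cite{Auluck} combined with \eqref{GF_unrestricted_pts}), namely
\begin{equation*}
\sum_{i\geq 0}\frac{q^{i^2+i}}{(q)_i^2}=\frac{1}{(q)_\infty}\sum_{j\geq 0}(-1)^{j}q^{{j+1\choose 2}},
\end{equation*}
and then recognizes the right-hand side as the sum over $k\geq 0$ of Dyson's generating function for partitions with fixed crank $k$, \cite[(3.1)]{BerkovichGarvanCrank}. Your unproven identity is in fact equivalent to this pair of citable results: Auluck's identity above together with Dyson's formula $C_0(q)=\frac{1}{(q)_\infty}\bigl(1+2\sum_{j\geq 1}(-1)^{j}q^{j(j+1)/2}\bigr)$ immediately give your displayed equation. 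So your route via crank symmetry is viable and only marginally different from the paper's, but to complete it you must either prove the Durfee-rectangle identity yourself or, as the paper does, import it from \cite{Auluck} and \cite{BerkovichGarvanCrank}.
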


Let $(k,m)=(2,2)$ and $M\rightarrow\infty$ in \eqref{Corollary_of_Finite_Weighted_GF_EQN}. Comparison between \cite[(3)]{Auluck} and \cite[(11)]{Auluck} with the use of the right-hand side equation of \eqref{GF_unrestricted_pts} shows \[\sum_{i\geq 0} \frac{q^{i^2+i}}{(q)_i^2} =  \frac{1}{(q)_\infty} \sum_{i\geq 0} (-1)^{i} q^{i+1\choose 2}. \] The right-hand side of the above line is the summation over $k\geq 0$ of the Dyson's equation for a fixed crank $k$, \cite[(3.1)]{BerkovichGarvanCrank}. This yields Theorem~\ref{RR2_THM}.

It is not clear that for $n\geq 2$ the number of partitions of $n$ with positive crank is the same as the number of partitions of $n$ with negative crank. A combinatorial proof of this phenomenon as well as refinements of the fixed crank's generating functions can be found in \cite{BerkovichGarvanCrank}. 

We would like to point out that for $k\geq 2$ the weights $\omega_{k,m}(\pi)$ take the identical values on the sets $\P_M(k-1,m)$ and $\P_M(k,m)$. Therefore, by taking the difference of $\omega_{1,2}$ and $\omega_{2,2}$ on the set $\R\R_1$ one can show

\begin{theorem}\label{Tilde_Weights_THEOREM} Let $\nu(\pi)$ denote the number of parts of $\pi$. Then
\begin{align}
\label{negative_crank}\sum_{\pi\in \R\R_1} \tilde{\omega}_{1}(\pi) q^{|\pi|} &= \sum_{\pi \in \C_{\leq -1}} q^{|\pi|} = q+\sum_{\pi \in \C_{\geq 1}} q^{|\pi|} ,\\
\label{zero_crank}\sum_{\pi\in \R\R_1} \tilde{\omega}_{2}(\pi) q^{|\pi|} &= -q + \sum_{\pi \in \C_{=0}} q^{|\pi|} , 
\end{align}
where
\begin{equation}\label{Tilde_Weights}\begin{array}{ccc}\displaystyle
\tilde{\omega}_{1}(\pi) = \prod_{i=1}^{\nu(\pi)-1} (\lambda_{i}-\lambda_{i+1}-1) &\text{and}& \displaystyle\tilde{\omega}_{2}(\pi) = (\lambda_{\nu(\pi)}-2)\cdot\prod_{i=1}^{\nu(\pi)-1} (\lambda_{i}-\lambda_{i+1}-1).
\end{array}\end{equation}
\end{theorem}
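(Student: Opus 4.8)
The plan is to write the two modified weights $\tilde\omega_1,\tilde\omega_2$ as integer linear combinations of Alladi's weight $\omega_{1,2}$ and the weight $\omega_{2,2}$ coming from \eqref{General_Weight}, and then to feed these into Theorem~\ref{Alladi_weighted_sum}, Theorem~\ref{RR2_THM}, and the symmetry of the crank. First I would record the pointwise identities on $\R\R_1$: writing $\nu=\nu(\pi)$, the definition \eqref{General_Weight} gives $\omega_{1,2}(\pi)=\lambda_\nu\,\tilde\omega_1(\pi)$ and $\omega_{2,2}(\pi)=(\lambda_\nu-1)\,\tilde\omega_1(\pi)$, so that
\[
\tilde\omega_1(\pi)=\omega_{1,2}(\pi)-\omega_{2,2}(\pi),
\qquad
\tilde\omega_2(\pi)=\omega_{2,2}(\pi)-\tilde\omega_1(\pi)=2\,\omega_{2,2}(\pi)-\omega_{1,2}(\pi).
\]
These hold term by term for every $\pi\in\R\R_1$, hence may be summed against $q^{|\pi|}$.

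Next I would reconcile the fact that Theorem~\ref{RR2_THM} is stated over $\R\R_2$ rather than $\R\R_1$. A partition in $\R\R_1\setminus\R\R_2$ has smallest part $\lambda_\nu=1$, so the leading factor $(\lambda_\nu-1)$ of $\omega_{2,2}$ vanishes on it; this is precisely the ``identical values'' observation preceding the theorem, and it yields
\[
\sum_{\pi\in\R\R_1}\omega_{2,2}(\pi)\,q^{|\pi|}
=\sum_{\pi\in\R\R_2}\omega_{2,2}(\pi)\,q^{|\pi|}
=\sum_{\pi\in\C_{\geq0}}q^{|\pi|},
\]
the last step being \eqref{omega_22}. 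Summing the first pointwise identity and using Theorem~\ref{Alladi_weighted_sum} together with the fact that every partition has a unique crank (so $1/(q)_\infty$ splits as $\sum_{\C_{\geq0}}+\sum_{\C_{\leq-1}}$) then gives
\[
\sum_{\pi\in\R\R_1}\tilde\omega_1(\pi)\,q^{|\pi|}
=\frac1{(q)_\infty}-\sum_{\pi\in\C_{\geq0}}q^{|\pi|}
=\sum_{\pi\in\C_{\leq-1}}q^{|\pi|},
\]
which is the first equality of \eqref{negative_crank}.

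For the second equality of \eqref{negative_crank} I would invoke the symmetry of the crank: for $n\geq2$ the number of partitions of $n$ with crank $m$ equals the number with crank $-m$, the combinatorial proof of which is in \cite{BerkovichGarvanCrank}. Thus $\sum_{\C_{\leq-1}}q^{|\pi|}$ and $\sum_{\C_{\geq1}}q^{|\pi|}$ agree in every degree $n\geq2$, while in degree $n=1$ the unique partition $(1)$ has crank $-1$ and there is no partition of $1$ with positive crank, producing the discrepancy $q$; hence $\sum_{\C_{\leq-1}}q^{|\pi|}=q+\sum_{\C_{\geq1}}q^{|\pi|}$. Finally, summing the second pointwise identity and substituting the sums already computed yields
\[
\sum_{\pi\in\R\R_1}\tilde\omega_2(\pi)\,q^{|\pi|}
=\sum_{\pi\in\C_{\geq0}}q^{|\pi|}-\sum_{\pi\in\C_{\leq-1}}q^{|\pi|}
=\sum_{\pi\in\C_{=0}}q^{|\pi|}+\sum_{\pi\in\C_{\geq1}}q^{|\pi|}-\Bigl(q+\sum_{\pi\in\C_{\geq1}}q^{|\pi|}\Bigr)
=-q+\sum_{\pi\in\C_{=0}}q^{|\pi|},
\]
which is \eqref{zero_crank}. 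The only genuinely delicate ingredient is the crank symmetry together with its single failure at $n=1$; the remaining work is the bookkeeping of the vanishing $(\lambda_\nu-1)$ factor and the unique-crank decomposition of $1/(q)_\infty$, both of which are routine once the pointwise weight identities are in hand.
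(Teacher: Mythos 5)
Your proposal is correct and takes essentially the same route as the paper's proof: the pointwise identities $\tilde{\omega}_1=\omega_{1,2}-\omega_{2,2}$ and $\tilde{\omega}_2=\omega_{2,2}-\tilde{\omega}_1$ on $\R\R_1$, the vanishing of the factor $(\lambda_{\nu(\pi)}-1)$ on $\R\R_1\setminus\R\R_2$ so that \eqref{omega_22} extends to $\R\R_1$, the subtraction against \eqref{omega_12}, and the crank symmetry of \cite{BerkovichGarvanCrank} with its single failure at $n=1$ are exactly the ingredients the paper combines. You simply make explicit the bookkeeping (the weight identities, the splitting $\sum_{\pi\in\U}q^{|\pi|}=\sum_{\pi\in\C_{\geq0}}q^{|\pi|}+\sum_{\pi\in\C_{\leq-1}}q^{|\pi|}$, and the $n=1$ anomaly) that the paper leaves implicit.
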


Replacing $\R\R_2$ with $\R\R_1$ on the left-hand side of \eqref{omega_22} and subtracting this from \eqref{omega_12} side-by-side proves \eqref{negative_crank}. The second equality of \eqref{negative_crank} is due to \cite{BerkovichGarvanCrank}. Difference of \eqref{omega_22} and \eqref{negative_crank} shows \eqref{zero_crank}.

Let $\D_l$ to be the subset of $\D$ that consists of the partitions into exactly $l$ distinct parts. Recall that $\mathcal{O}(\pi) := \lambda_1 + \lambda_3+\dots$, the sum of the odd indexed parts, for a partition $\pi=(\lambda_1,\lambda_2,\dots)$.

\begin{theorem}\label{Weight_Change_Odd_Indexed_Parts_with_number_of_parts_known} For $l$ a non-negative number and $v\in\{0,1\}$, we have
\begin{equation}\label{Odd_Index_Part_Weights}\sum_{\pi\in \D_{2l+v}} q^{\mathcal{O}(\pi)} = \sum_{\pi \in \P_{l+v}(2-v,2)} \left[(1-v)\omega_{2,2}(\pi)+v\tilde{\omega}_1(\pi)\right] q^{|\pi|}.\end{equation}
\end{theorem}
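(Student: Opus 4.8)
The plan is to establish a weight-preserving bijection between partitions in $\D_{2l+v}$ graded by the statistic $\mathcal{O}$ and partitions in $\P_{l+v}(2-v,2)$ graded by norm $|\pi|$ and carrying the indicated weights. The natural device is to read a partition into distinct parts by pairing consecutive parts and tracking what each pair contributes to the odd-indexed sum. Write $\pi=(\lambda_1>\lambda_2>\dots)\in\D$ with an even or odd number of parts according to $v$. The odd-indexed parts $\lambda_1,\lambda_3,\lambda_5,\dots$ are the ones that are counted by $\mathcal{O}$, and the even-indexed parts $\lambda_2,\lambda_4,\dots$ are ``free'' in the sense that they do not contribute to $\mathcal{O}(\pi)$; the only constraints on them come from the strict-decrease condition of $\D$, namely $\lambda_{2i-1}>\lambda_{2i}>\lambda_{2i+1}$. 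The first step is to make this pairing precise and see that summing $q^{\mathcal{O}(\pi)}$ over a fixed choice of the odd-indexed parts factors into a product over the pairs, where each even-indexed part $\lambda_{2i}$ ranges freely over the integers strictly between its two neighbors.

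Second, I would count, for fixed odd-indexed parts, the number of admissible choices of the even-indexed parts. If the odd-indexed data are $\mu_i:=\lambda_{2i-1}$ (a partition into $l+v$ strictly decreasing parts, with the smallest part bounded below depending on $v$), then the number of integers available for $\lambda_{2i}$ lying strictly between $\mu_i$ and $\mu_{i+1}$ is exactly $\mu_i-\mu_{i+1}-1$, and the number of choices for the final even-indexed part (when $v=0$) lying strictly between $\mu_l$ and $0$ is $\mu_l-1$. This is precisely the source of the weight factors: $\prod(\lambda_i-\lambda_{i+1}-1)$ for the gaps, and the boundary factor $\lambda_{\nu}-1$ matching $\tilde\omega_1$ or the shifted version matching $\omega_{2,2}$. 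The key calculation is thus to identify the odd-indexed partition $(\mu_1,\mu_2,\dots)$, after a translation that normalizes it into $\P_{l+v}(2-v,2)$, with the partition $\pi$ on the right-hand side, and to check that the count of free even-indexed parts reproduces exactly $(1-v)\omega_{2,2}+v\tilde\omega_1$. Here the two cases split cleanly: when $v=1$ there are $l$ pairs and one unpaired largest odd-indexed part, giving the $\tilde\omega_1$ weight with no extra boundary term; when $v=0$ there are $l$ pairs with the last even-indexed part ranging down to $1$, giving the $\omega_{2,2}$ weight whose $(\lambda_M+1-k)=(\lambda_M-1)$ factor accounts for that last range.

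The main obstacle I expect is bookkeeping the boundary conditions correctly so that the translated odd-indexed partition lands in exactly $\P_{l+v}(2-v,2)$ and not a set off by one in the smallest-part or gap constraint. Specifically, because $\mathcal{O}$ sees only $\mu_i=\lambda_{2i-1}$ and these are constrained by $\lambda_{2i-1}\ge\lambda_{2i}+1\ge\lambda_{2i+1}+2=\mu_{i+1}+2$, the $\mu$'s satisfy a minimal gap of $2$, which is why the target set has difference parameter $m=2$; pinning down the correct smallest-part parameter $k=2-v$ and the correct number of parts $l+v$ in each parity case requires care with the unpaired part when $v=1$ and with the lower range for $\lambda_{2l}$ when $v=0$. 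Once the translation $\mu_i\mapsto\mu_i-(\text{shift})$ is chosen to clear these boundary discrepancies, matching $q^{\mathcal O(\pi)}=q^{|\mu|}$ with the normalized norm on the right follows, and the weight factors emerge automatically from the free-choice count above, completing the identity.
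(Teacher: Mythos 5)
Your proposal is correct and takes essentially the same route as the paper's proof: project $\pi\in\D_{2l+v}$ onto its odd-indexed parts and count pre-images, with the gap factors $(\mu_i-\mu_{i+1}-1)$ and the boundary factor ($\mu_l-1$ when $v=0$, trivial when $v=1$) reproducing the weight $(1-v)\omega_{2,2}(\pi)+v\tilde{\omega}_1(\pi)$. The one superfluous element is the anticipated translation: no shift is needed, since the odd-indexed parts automatically have gaps $\geq 2$ and smallest part $\geq 2-v$, hence already lie in $\P_{l+v}(2-v,2)$ with $|\mu|=\mathcal{O}(\pi)$ on the nose (any nonzero shift would in fact break the equality of exponents).
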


\begin{proof} Let $\pi = (\lambda_1,\lambda_2,\dots,\lambda_{2l+v})$ be a partition in $D_{2l+v}$. Consider the projection mapping $\textbf{P}_{2l+v}:\D_{2l+v} \rightarrow \P_{l+v}(2-v,2)$ as $\textbf{P}_{2l+v}(\pi) = (\lambda_1^*,\lambda_2^*,\dots,\lambda_{l+v}^*)=(\lambda_1,\lambda_3,\dots, \lambda_{2l+(-1)^{v+1}})$. Therefore, $\mathcal{O}(\pi) = |\textbf{P}_{2l+v}(\pi)|$. 

The number of pre-images of a partition must be counted for the verification of \eqref{Odd_Index_Part_Weights}. Given $\textbf{P}_{2l+v}(\pi)$, there are $(\lambda_1^*-\lambda_2^*-1)=(\lambda_1-\lambda_3-1)$ possible $\lambda_2$'s in the pre-image, $(\lambda_2^*-\lambda_3^*-1)=(\lambda_3-\lambda_5-1)$ possible $\lambda_4$'s in the pre-image, and so on. Hence, the total number of possible $\pi \in \D_{2l+v}$ that would project to $\textbf{P}_{2l+v}(\pi) \in \P_{l+v}(2-v,2)$ is \[\prod_{i=1}^{l+v-1} (\lambda_{2l-1}-\lambda_{2l+1}-1)\times \text{``the number of possibilities for the smallest part."}\]

Depending on $v$ the number of possibilities for the smallest part changes. If $v=1$, then $\lambda_{l+1}^*=\lambda_{2l+1}$ is the smallest part. There is only one possibility for the smallest part, which makes the weight of $\textbf{P}_{2l+1}(\pi)$ to be $\tilde{\omega}_1(\pi)$ for a $\pi\in D_{2l+1}$. If $v=0$, then $\lambda_{l}^* = \lambda_{2l-1}$ is the second smallest part of the pre-image $\pi$. Hence, there are $(\lambda_{2l-1}-1)$ possibilities for the non-zero smallest part $\lambda_{2l}$ of $\pi$, which shows that the weight of $\textbf{P}_{2l}(\pi)$ is $\omega_{2,2}(\pi)$ for a $\pi\in D_{2l}$.
\end{proof}
 Observe that Theorem~\ref{Weight_Change_Odd_Indexed_Parts_with_number_of_parts_known} connects the generating function for the number of partitions with the same sum of the odd-indexed parts with all three theorems: Theorem~\ref{Alladi_weighted_sum}, Theorem~\ref{RR2_THM} and Theorem~\ref{Tilde_Weights_THEOREM}. For any partition $\pi\in\R\R_1$, $w_{1,2}(\pi) =\omega_{2,2}(\pi)+\tilde{\omega}_1(\pi)$ by \eqref{General_Weight} and \eqref{Tilde_Weights}. Therefore, summing the left-hand side of \eqref{Odd_Index_Part_Weights} over all $l$ and $v$ yields \begin{equation}\label{proof_of_Ali_THM1}\sum_{\pi\in \D} q^{\mathcal{O}(\pi)}=\Sum_{l,v\geq 0}\ \sum_{\pi\in \D_{2l+v}} q^{\mathcal{O}(\pi)} = \sum_{\pi\in \R\R_1} \omega_{1,2}(\pi) q^{|\pi|}. \end{equation} This shows Theorem~\ref{AliTHM1} as a corollary of Theorem~\ref{Weight_Change_Odd_Indexed_Parts_with_number_of_parts_known} using Theorem~\ref{Alladi_weighted_sum}. Similarly letting $v=0$ in \eqref{Odd_Index_Part_Weights} and summing over non-negative $l$ gives \begin{equation}\label{Half_of_RR2_Crank_result}\sum_{\pi\in \D_e} q^{\mathcal{O}(\pi)}=\sum_{l\geq 0}\ \sum_{\pi\in \D_{2l}} q^{\mathcal{O}(\pi)} = \sum_{\pi\in \R\R_2} \omega_{2,2}(\pi) q^{|\pi|}, \end{equation} of Theorem~\ref{RR2_THM}, where $\D_e$ is the set of partitions into even number of distinct parts. Similarly $v=1$ gives the connection of this partition statistic with \eqref{negative_crank}. Combinatorially, the equation \eqref{Half_of_RR2_Crank_result} with Theorem~\ref{RR2_THM} gives:
 \begin{theorem}
The number of partitions into even number of distinct parts whose odd-indexed parts' sum is $n$ is equal to the number of partitions of $n$ with non-negative crank
 \end{theorem}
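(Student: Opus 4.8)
The plan is to read this statement off, by coefficient comparison, from an analytic identity that is itself nothing more than the concatenation of two results already in hand. First I would combine \eqref{Half_of_RR2_Crank_result} with Theorem~\ref{RR2_THM}: the right-hand side of \eqref{Half_of_RR2_Crank_result} is $\sum_{\pi\in \R\R_2} \omega_{2,2}(\pi) q^{|\pi|}$, and by \eqref{omega_22} this equals $\sum_{\pi \in \C_{\geq 0}} q^{|\pi|}$. Substituting yields the single generating-function identity
\[
\sum_{\pi\in \D_e} q^{\mathcal{O}(\pi)} = \sum_{\pi \in \C_{\geq 0}} q^{|\pi|}.
\]
Everything combinatorial is then a matter of reading this equality level by level.

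Next I would interpret both sides as ordinary power series in $q$ and extract the coefficient of $q^n$. On the left, each partition $\pi \in \D_e$ contributes exactly $1$ to the coefficient of $q^{\mathcal{O}(\pi)}$, so the coefficient of $q^n$ counts precisely the partitions into an even number of distinct parts whose odd-indexed parts sum to $n$. On the right, each $\pi$ with non-negative crank contributes $1$ to the coefficient of $q^{|\pi|}$, so the coefficient of $q^n$ counts the partitions of $n$ with crank $\geq 0$. Equating these two coefficients is exactly the asserted equality of cardinalities.

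I do not expect a genuine obstacle, since the two underlying generating-function identities are already established; the only point worth verifying is that both sums are honest term-by-term enumerations rather than nontrivially weighted sums. This is immediate on the $\C_{\geq 0}$ side, and on the $\D_e$ side it holds because the weight attached to each partition is $1$: the nontrivial weight $\omega_{2,2}$ has already been absorbed in passing from $\R\R_2$ to $\D_e$ through the norm-preserving projection $\mathbf{P}_{2l}$ of Theorem~\ref{Weight_Change_Odd_Indexed_Parts_with_number_of_parts_known}, under which $\mathcal{O}(\pi)=|\mathbf{P}_{2l}(\pi)|$ and the number of preimages is exactly $\omega_{2,2}$. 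Hence no residual weighting survives on the $\D_e$ side, and the statement follows by comparing coefficients, with the entire combinatorial content carried by the earlier weighted results.
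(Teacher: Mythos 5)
Your proposal is correct and follows exactly the paper's own route: the paper likewise obtains the theorem by combining \eqref{Half_of_RR2_Crank_result} with Theorem~\ref{RR2_THM} and reading the resulting identity $\sum_{\pi\in \D_e} q^{\mathcal{O}(\pi)} = \sum_{\pi \in \C_{\geq 0}} q^{|\pi|}$ combinatorially, coefficient by coefficient. Your extra check that both sides are unweighted enumerations (the weight $\omega_{2,2}$ having been absorbed as the preimage count of the projection $\mathbf{P}_{2l}$) is precisely the content already supplied by Theorem~\ref{Weight_Change_Odd_Indexed_Parts_with_number_of_parts_known}, so nothing is missing.
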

 
The proof of Theorem~\ref{Weight_Change_Odd_Indexed_Parts_with_number_of_parts_known} shows that replacing the norm with a partition statistic in the generating function (such as $\mathcal{O}(\pi)$) may be related with generating function for the weighted count of partitions. Changing statistics in itself is an interesting question. Moreover, as exemplified in \eqref{proof_of_Ali_THM1} and \eqref{Half_of_RR2_Crank_result}, the study of writing new generating functions with respect to different statistics instead of the norm would also yield non-trivial examples of \eqref{Constant_Weights_Def}.

\section{Generating Functions with respect to the Sum of Odd-indexed Parts}\label{Section5}

We want to remind the reader that the convenience of writing the generating functions for the number of partitions with respect to the norm comes from there being a finite number of partitions having the prescribed norm. The same applies for $\mathcal{O}(\pi)$. There are finite number of partitions $\pi$ with $\mathcal{O}(\pi)=n$. This is not necessarily true for all the partition statistics. An analogue of the statistics $\mathcal{O}(\pi)$ is an example of this observation. Let $\E(\pi):=\lambda_2+\lambda_4+\dots$, the sum of all the even-indexed parts, for $\pi= (\lambda_1,\lambda_2,\dots)$, then all partitions $\pi_i = (\lambda_2+i,\lambda_2,\dots)$ for $i\in \mathbb{Z}_{{>0}}$ satisfy $\E(\pi_i)=\E(\pi_j)$, $\forall i,\ j\in \mathbb{Z}_{{>0}}$. 

Writing generating functions with some natural partition statistics such as $\mathcal{O}$ and $\E$ can be studied directly from the the results of \cite{BerkovichUncu2} ,\cite{Boulet}, and \cite{Masao}. One decorates the Ferrers diagrams by writing on the dots on the rows. On odd-indexed rows one puts alternating $a$ and $b$'s starting from an $a$ and does the same with $c$ and $d$ for the even-indexed rows starting from a $c$. We call these \textit{four-decorated} Ferrers diagrams. In 2006, Boulet \cite{Boulet} found explicit formulas for the generating functions for four-decorated Ferrers diagrams from the sets $\U$ and $\D$. Let $\Phi(a,b,c,d)$ be the generating function for the weighted four-decorated Ferrers diagrams and $\Psi(a,b,c,d)$ be the generating function for the weighted four-decorated Ferrers diagrams that has distinct row sizes. Abstractly
\[
\Psi(a,b,c,d) := \sum_{\pi\in \D} \omega_\pi(a,b,c,d),\text{ and }\ \Phi(a,b,c,d) := \sum_{\pi\in \U} \omega_\pi(a,b,c,d),
\]
where \[\omega_\pi (a,b,c,d):=a^{\#\text{ of }a's}b^{\#\text{ of }b's}c^{\#\text{ of } c's}d^{\#\text{ of } d's}.\]

Explicit formulas of these generating functions are given by the following theorem.

\begin{theorem}[Boulet, 2006] \label{Boulet_THM}For variables $a$, $b$, $c$, and $d$ and $Q:=abcd$, we have\[\Psi(a,b,c,d) :=\frac{(-a,-abc;Q)_\infty}{(ab;Q)_\infty},\text{  and  }
\Phi(a,b,c,d) := \frac{(-a,-abc;Q)_\infty}{(ab,ac,Q;Q)_\infty}.\]\end{theorem}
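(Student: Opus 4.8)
The plan is to prove both product formulas by a Durfee-square decomposition that anchors the parities of the row and column indices. The only genuine subtlety in the four-colouring is that the colour of a cell $(i,j)$ depends on the parities of both $i$ and $j$, hence on the \emph{global} position of a part within the diagram rather than on the part alone; a decomposition that fixes a reference square controls these parities cleanly.

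First I would treat $\Phi$. Every $\pi\in\U$ has a Durfee square of a unique side $n\ge 0$; write $\pi$ as the $n\times n$ square together with the sub-partition $R$ lying in columns $>n$ (with at most $n$ rows) and the sub-partition $B$ lying in rows $>n$ (with at most $n$ columns). Since the colour of a cell depends only on the parities of its coordinates, the weight factors as $\omega_\pi=S_n\,\omega_R\,\omega_B$, where $S_n$ is the explicit monomial weight of the square (its exponents are quadratic in $\lfloor n/2\rfloor$, so they organise naturally into powers of $Q=abcd$ times a small correction), and $\omega_R,\omega_B$ are computed with the coordinates inherited from $\pi$, so their colourings are again checkerboard with a parity shift determined by $n$. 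The generating functions $\mathcal{R}_n:=\sum_R\omega_R$ and $\mathcal{B}_n:=\sum_B\omega_B$ range over partitions confined to $n$ rows (resp. $n$ columns), so each is a \emph{finite} product in the four variables, obtained by summing a bounded alternating-colour diagram one column (resp. one row) at a time. This reduces the claim to evaluating the series $\Phi=\sum_{n\ge0}S_n\,\mathcal{R}_n\,\mathcal{B}_n$ in closed form.

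Next I would carry out that evaluation. The series is a four-variable refinement of the classical Durfee-square identity $1/(q)_\infty=\sum_{n\ge0}q^{n^2}/(q)_n^2$ appearing in \eqref{GF_unrestricted_pts}; collecting the monomials $S_n$ and the finite products $\mathcal{R}_n,\mathcal{B}_n$, I would recognise the sum as a specialisation of Cauchy's identity, or of the Jacobi triple product, that produces the numerator factors $(-a;Q)_\infty(-abc;Q)_\infty$ and the denominator $(ab,ac,Q;Q)_\infty$. For $\Psi$ I would run the identical decomposition over $\pi\in\D$: the only change is that the diagonal and the two regions now inherit the strict-decrease constraint, which converts one repeatable factor $1/(\,\cdot\,;Q)_\infty$ into a distinct-part factor and removes the factors $(ac,Q;Q)_\infty$, yielding $\Psi=(-a,-abc;Q)_\infty/(ab;Q)_\infty$. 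As an independent check, and an alternative route, note that the quotient $\Phi/\Psi=1/(ac,Q;Q)_\infty$ is exactly the generating function that restores repeated parts, so one may instead establish $\Psi$ directly and then recover $\Phi$ by a standard insertion of repeated columns of weights $ac\,Q^{\,n}$ and $Q^{\,n}$.

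I expect the main obstacle to be the closed-form evaluation of the Durfee-square sum rather than the decomposition itself: one must keep the four colours straight under the parity shift induced by $n$, match the quadratic $Q$-exponents hidden in $S_n$, and identify the bounded-region products $\mathcal{R}_n,\mathcal{B}_n$ precisely enough that the summation telescopes to the stated infinite products. A secondary difficulty is the distinct-parts case, where the strict inequalities must be tracked carefully through both regions to determine exactly which Pochhammer factor is removed and which becomes a distinct-part (numerator) factor.
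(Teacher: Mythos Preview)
The paper does not prove this theorem. It is stated as Boulet's 2006 result, cited to \cite{Boulet}, and then invoked as a black box: the specialisations $\Psi(q,q,1,1)$, $\Phi(q,q,1,1)$, and $\Psi(q,1,q,1)$ are read off to obtain Theorem~\ref{AliTHM1} and Theorem~\ref{AliTHM2}. There is no argument in the paper to compare your outline against.

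As a standalone sketch, your Durfee-square plan is a reasonable route and the weight factorisation $\omega_\pi=S_n\,\omega_R\,\omega_B$ is correct, since the colour of a cell depends only on the parities of its absolute coordinates. But the proposal is not yet a proof: the closed-form evaluation of $\sum_{n\ge 0} S_n\,\mathcal{R}_n\,\mathcal{B}_n$ is deferred to an unspecified ``specialisation of Cauchy's identity, or of the Jacobi triple product'', and this is exactly the step where the four-variable refinement demands work beyond the one-variable identity in \eqref{GF_unrestricted_pts}. Your alternative route---prove $\Psi$ first and recover $\Phi$ via $\Phi/\Psi=1/(ac,Q;Q)_\infty$---also needs justification: you must explain combinatorially why inserting repeated parts into a strict partition contributes precisely the factors $(ac;Q)_\infty^{-1}$ and $(Q;Q)_\infty^{-1}$ under the four-colouring, which again comes down to tracking the parity shifts carefully.
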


Theorem~\ref{Boulet_THM} provides another direct proof of Theorem~\ref{AliTHM1}: $\Psi(q,q,1,1,)$. Similarly $\Phi(q,q,1,1)$, and $\Psi(q,1,q,1)$ yield

\begin{theorem}\label{AliTHM2}
\[\sum_{\pi\in \U} q^{\mathcal{O}(\pi)} = \frac{1}{(q)_\infty^2},\ \text{  and  }\ 
\label{corollary_part_2}\sum_{\pi\in \D} q^{\mathcal{O}(\pi')} = (-q)^2_\infty,\]
where $\pi'$ is the conjugate of the partition $\pi$.
\end{theorem}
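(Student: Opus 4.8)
The plan is to obtain both identities by specializing the four-variable generating functions $\Phi$ and $\Psi$ of Theorem~\ref{Boulet_THM} and then reducing the resulting infinite products with the analytic form of Euler's identity recorded in \eqref{GF_distinct_pts}. Before specializing, I would make the decoration rule fully explicit at the level of individual cells: under the alternating scheme described before Theorem~\ref{Boulet_THM}, a cell carries an $a$ precisely when it lies in an odd row and an odd column, a $b$ in an odd row and even column, a $c$ in an even row and odd column, and a $d$ in an even row and even column. Fixing this cell-labelling pattern is what makes the two specializations transparent.

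For the first identity I would set $(a,b,c,d)=(q,q,1,1)$, so that $\omega_\pi(q,q,1,1)=q^{\#a+\#b}$. Since the $a$'s and $b$'s together fill exactly the cells of the odd-indexed rows, $\#a+\#b=\lambda_1+\lambda_3+\cdots=\mathcal{O}(\pi)$, and because $\Phi$ ranges over all of $\U$ this yields $\sum_{\pi\in\U}q^{\mathcal{O}(\pi)}=\Phi(q,q,1,1)$. With $Q=q^2$ Boulet's formula gives $\tfrac{(-q,-q^2;q^2)_\infty}{(q^2,q,q^2;q^2)_\infty}$. I would then collapse the numerator via $(-q;q^2)_\infty(-q^2;q^2)_\infty=(-q)_\infty$, combine $(q;q^2)_\infty(q^2;q^2)_\infty=(q)_\infty$ in the denominator, and use $(-q)_\infty=(q^2;q^2)_\infty/(q)_\infty$; these cancellations telescope to $1/(q)_\infty^2$.

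For the second identity I would set $(a,b,c,d)=(q,1,q,1)$, so that $\omega_\pi(q,1,q,1)=q^{\#a+\#c}$ now counts exactly the cells lying in odd columns. The number of cells in column $j$ is $\lambda_j'$, the $j$-th part of the conjugate $\pi'$, whence $\#a+\#c=\lambda_1'+\lambda_3'+\cdots=\mathcal{O}(\pi')$; since $\Psi$ ranges over $\D$ this gives $\sum_{\pi\in\D}q^{\mathcal{O}(\pi')}=\Psi(q,1,q,1)$. With $Q=q^2$ this equals $\tfrac{(-q)_\infty}{(q;q^2)_\infty}$, and applying $1/(q;q^2)_\infty=(-q)_\infty$ from \eqref{GF_distinct_pts} collapses it to $(-q)_\infty^2$.

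The step demanding the most care is the decoration bookkeeping, and in particular confirming that the $(q,1,q,1)$ specialization tracks the \emph{conjugate} statistic $\mathcal{O}(\pi')$ rather than $\mathcal{O}(\pi)$: the genuinely combinatorial point is the identification $\#a+\#c=\mathcal{O}(\pi')$ through reading the Ferrers diagram by columns. Once the cell-labelling pattern and this column-versus-conjugate correspondence are pinned down, everything else is a routine simplification of $q$-Pochhammer products by Euler's identity.
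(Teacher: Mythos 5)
Your proposal is correct and follows exactly the paper's route: the paper proves Theorem~\ref{AliTHM2} precisely by the specializations $\Phi(q,q,1,1)$ and $\Psi(q,1,q,1)$ of Theorem~\ref{Boulet_THM}, stating only the conclusion. Your write-up merely supplies the details the paper leaves implicit---the cell-labelling that identifies $\#a+\#b=\mathcal{O}(\pi)$ and $\#a+\#c=\mathcal{O}(\pi')$, and the Pochhammer simplifications via Euler's identity---and all of these check out.
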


Theorem~\ref{AliTHM2} can easily be translated to combinatorial results using vector partitions. Some non-trivial examples of \eqref{Constant_Weights_Def} coming from Theorem~\ref{AliTHM2} similar to \eqref{proof_of_Ali_THM1} and \eqref{Half_of_RR2_Crank_result} are as follows:

\begin{corollary}\label{COR_ALI2} Let $par(n)$ be the parity of $n$, $par(n)=0$ if $n$ even and $1$ otherwise. Let $\K$ be the set of partitions with gaps between parts and the smallest part both $\leq 2$. Let $\pi = (\lambda_1,\lambda_2,\dots)$, then
\[\sum_{\pi\in \U} q^{\mathcal{O}(\pi)} =\sum_{\pi\in \U}\omega_{0,0}(\pi) q^{|\pi|} ,\ \text{  and  }\
\sum_{\pi\in \D} q^{\mathcal{O}(\pi')} =\sum_{\pi\in \K} \hat{\omega}_1(\pi) q^{|\pi|},\] where \[\omega_{0,0} (\pi) := (\lambda_{\nu(\pi)}+1)\cdot \prod_{i=1}^{\nu(\pi)-1} (\lambda_i-\lambda_{i+1}+1),\ \text{ and }\ \hat{\omega}_1(\pi) := 2^{par(\lambda_{\nu(\pi)})}\cdot \prod_{i=1}^{\nu(\pi)-1} 2^{par(\lambda_i - \lambda_{i+1})}.\]
\end{corollary}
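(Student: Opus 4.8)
The plan is to derive Corollary~\ref{COR_ALI2} directly from Theorem~\ref{AliTHM2} by re-proving each of the two generating-function identities combinatorially, so that the right-hand sides emerge as weighted counts over the sets $\U$ and $\K$ respectively. The key mechanism is the same projection idea used in the proof of Theorem~\ref{Weight_Change_Odd_Indexed_Parts_with_number_of_parts_known}: replacing the statistic $\mathcal{O}$ (or $\mathcal{O}\circ(\cdot)'$) with the norm of a smaller partition, while recording the number of preimages as a weight.

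For the first identity, I would start from $\sum_{\pi\in\U}q^{\mathcal{O}(\pi)}$ and project an unrestricted partition $\pi=(\lambda_1,\lambda_2,\dots)$ onto its odd-indexed parts $(\lambda_1,\lambda_3,\dots)$, exactly as in the proof of Theorem~\ref{Weight_Change_Odd_Indexed_Parts_with_number_of_parts_known} but now with no distinctness constraint, so the gaps $\lambda_i-\lambda_{i+1}$ are only $\geq 0$ rather than $\geq 1$. Counting preimages then gives, for a partition with $2l$ or $2l+1$ parts, a factor $(\lambda_i^*-\lambda_{i+1}^*+1)$ for each interior gap and $(\lambda^*_{\nu}+1)$ for the smallest part (the $+1$ now replacing the $-1$ of the distinct case because consecutive parts may be equal). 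Summing over the number of parts collapses the image set to all of $\U$ and produces exactly the weight $\omega_{0,0}$ as defined, matching $\P_{\leq M}(1,0)$ with $(k,m)=(0,0)$ in the $M\to\infty$ limit; the left-hand value $1/(q)_\infty^2$ then follows from Theorem~\ref{AliTHM2} (or directly from Corollary~\ref{Corollary_of_Finite_Weighted_GF} with $(k,m)=(0,0)$).

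For the second identity the cleanest route is to pass to conjugates first: $\mathcal{O}(\pi')$ for $\pi\in\D$ equals the sum of odd-indexed \emph{columns} of $\pi$, and since $\pi$ has distinct parts its conjugate $\pi'$ ranges over partitions whose parts decrease by gaps controlled by the column multiplicities. I would identify the image of the odd-column projection as the set $\K$ of partitions with all gaps and smallest part $\leq 2$, and show that each column contributes a factor of $2$ precisely when the relevant gap or smallest part is odd (because an odd gap admits two ways to distribute the suppressed even-indexed column among the two allowed sizes, whereas an even gap forces a unique choice). This is what the exponent $2^{par(\cdot)}$ records, giving the weight $\hat\omega_1$; the target value $(-q)^2_\infty$ again comes from Theorem~\ref{AliTHM2}.

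The main obstacle I anticipate is the second identity, specifically the bijective bookkeeping that turns conjugation of a distinct-part partition into the gap-and-smallest-part constraints defining $\K$ and, simultaneously, shows that the parity of each gap is exactly the right indicator for the factor of $2$. One must be careful that conjugation interacts correctly with the odd/even indexing of parts versus columns, and that the boundary case of the smallest column is handled by the $2^{par(\lambda_{\nu(\pi)})}$ term rather than an interior factor. I expect the first identity to be routine once the preimage count is set up, essentially the $(k,m)=(0,0)$ specialization of the argument already given for Theorem~\ref{Finite_Weighted_GF}, but the conjugate version will require explicitly tracking how the four-decoration of $\Psi(q,1,q,1)$ in Theorem~\ref{Boulet_THM} corresponds column-by-column to the parity weights, which is the step where an error is most likely to hide.
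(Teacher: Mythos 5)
Your proposal is correct and is essentially the paper's own (largely implicit) proof: the paper offers Corollary~\ref{COR_ALI2} as an ``easy translation'' of Theorem~\ref{AliTHM2}, meaning precisely the odd-indexed-part projection with preimage counting from the proof of Theorem~\ref{Weight_Change_Odd_Indexed_Parts_with_number_of_parts_known}, applied once to $\U$ (gaps $\geq 0$ yield the $+1$ factors of $\omega_{0,0}$, with the $(\lambda_{\nu}+1)$ boundary factor absorbing the choice of a possibly absent last even-indexed part) and once to conjugates of distinct-part partitions (these are exactly the partitions with consecutive differences in $\{0,1\}$ and smallest part $1$, so the image of the projection is exactly $\K$, a gap of $1$ or a smallest part $1$ gives two choices for the suppressed even-indexed part while gaps $0$ or $2$ and smallest part $2$ force it, which is the weight $\hat{\omega}_1$). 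The only slip is your parenthetical appeal to Corollary~\ref{Corollary_of_Finite_Weighted_GF} at $(k,m)=(0,0)$: that corollary requires $k\geq 1$, and substituting $(0,0)$ literally into \eqref{Corollary_of_Finite_Weighted_GF_EQN} gives the divergent series $\sum_{i\geq 0} 1/(q)_i^2$ because partitions with fewer parts are recounted for every $i$ once zero parts are allowed --- the correct $k=0$ analogue is $\sum \omega_{0,0}(\pi)q^{|\pi|} = 1/(q)_M^2$ summed over partitions into \emph{at most} $M$ parts --- but since the corollary as stated contains no product sides, this aside does not affect your main argument.
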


Mark that $\omega_{0,0}(\pi)$ fits the definition \eqref{General_Weight}. For uniform definition of a partition and the definition of $\P_{M}(k,m)$ sake, we have ignored this case in the prior conversation. We can also replace $\mathcal{O}(\pi')$ with $\E(\pi')$ on the set $\D$. This change would introduce a factor of 2 on the right-hand side of the respective identities of Theorem~\ref{AliTHM2} and Corollary~\ref{COR_ALI2}.

\section{Conclusion}\label{Section7}
%

It appears that the approach of writing generating functions with statistics other than the norm offers a wide variety of questions. One of these questions is identifying a statistic and a set of partitions for given weighted count. There are many examples of Theorem~\ref{Alladi_weighted_sum} like weighted partition identities. Having a way of connecting these type of identities with the generating functions written with respect to a partition statistics would expand the horizons of this study.

The crank's appearance in this study is fortunate but not unexpected. The author would like to recall that Theorem~\ref{RR2_THM} can been presented as a non-trivial example of \eqref{Constant_Weights_Def}.
\begin{theorem}
\[ \sum_{\pi\in {\D}_{e}} q^{\mathcal{O}(\pi)} = \sum_{\pi \in \C_{\geq 0}} q^{|\pi|},\]
where $\widehat{\D}_{e}$ is the set of partitions into even number of distinct parts.
\end{theorem}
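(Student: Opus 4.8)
The plan is to recognize this statement as the combinatorial synthesis of two facts already assembled in the excerpt, namely equation \eqref{Half_of_RR2_Crank_result} and Theorem~\ref{RR2_THM}, and simply to chain them. Writing $\D_e$ for the set of partitions into an even number of distinct parts, I would first observe that setting $v=0$ in Theorem~\ref{Weight_Change_Odd_Indexed_Parts_with_number_of_parts_known} and summing over all non-negative $l$ gives exactly
\[
\sum_{\pi\in \D_e} q^{\mathcal{O}(\pi)} = \sum_{\pi\in \R\R_2} \omega_{2,2}(\pi)\, q^{|\pi|},
\]
which is \eqref{Half_of_RR2_Crank_result}. Then an application of Theorem~\ref{RR2_THM} rewrites the right-hand side as $\sum_{\pi \in \C_{\geq 0}} q^{|\pi|}$, completing the argument in one stroke.

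If instead I wanted to establish the two links from scratch, I would first carry out the projection argument underlying \eqref{Half_of_RR2_Crank_result}. For a partition $\pi=(\lambda_1,\dots,\lambda_{2l})\in\D_{2l}$, I would take the map $\mathbf{P}_{2l}$ sending $\pi$ to its odd-indexed parts $(\lambda_1,\lambda_3,\dots,\lambda_{2l-1})\in\P_l(2,2)$, noting that $\mathcal{O}(\pi)=|\mathbf{P}_{2l}(\pi)|$ so that the statistic $\mathcal{O}$ on the source corresponds to the ordinary norm on the target. The content is then a preimage count: given an image partition, the number of admissible choices for each deleted even-indexed part $\lambda_{2i}$ is $(\lambda_{2i-1}-\lambda_{2i+1}-1)$ for the interior parts and $(\lambda_{2l-1}-1)$ for the final part, which is precisely $\omega_{2,2}$ evaluated on the image. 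Summing over $l$ and over $\P_l(2,2)$, i.e.\ over $\R\R_2$, yields the first equality.

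The genuinely substantial step is the second link, Theorem~\ref{RR2_THM}, which converts the $\omega_{2,2}$-weighted count over $\R\R_2$ into the generating function for partitions of non-negative crank. Here I would pass to analytic generating functions, using Corollary~\ref{Corollary_of_Finite_Weighted_GF} with $(k,m)=(2,2)$ and letting $M\to\infty$ to obtain $\sum_{i\ge 0} q^{i^2+i}/(q)_i^2$, and then identify this series with $\tfrac{1}{(q)_\infty}\sum_{i\ge 0}(-1)^i q^{\binom{i+1}{2}}$; recognizing the latter as the sum over $k\ge 0$ of Dyson's fixed-crank generating functions gives the crank interpretation. This analytic identification — equivalently, producing a direct combinatorial bijection between $\omega_{2,2}$-weighted $\R\R_2$ partitions and partitions of non-negative crank — is the main obstacle, since everything else is the bookkeeping of the projection map and a routine concatenation of the two equalities.
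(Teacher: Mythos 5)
Your proposal is correct and follows the paper's own route exactly: the paper obtains this theorem by combining \eqref{Half_of_RR2_Crank_result} (the $v=0$ case of Theorem~\ref{Weight_Change_Odd_Indexed_Parts_with_number_of_parts_known}, summed over $l$) with Theorem~\ref{RR2_THM}, and the paper's proof of Theorem~\ref{RR2_THM} is the same analytic chain you describe, via Corollary~\ref{Corollary_of_Finite_Weighted_GF} at $(k,m)=(2,2)$ with $M\to\infty$, the Auluck comparison giving $\sum_{i\geq 0} q^{i^2+i}/(q)_i^2 = \frac{1}{(q)_\infty}\sum_{i\geq 0}(-1)^i q^{\binom{i+1}{2}}$, and the identification of the latter with the sum of Dyson's fixed-crank generating functions over non-negative crank. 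Your preimage-counting details for the projection map likewise match the paper's proof of Theorem~\ref{Weight_Change_Odd_Indexed_Parts_with_number_of_parts_known}.
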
 
In a similar fashion, Theorem~\ref{Tilde_Weights_THEOREM} can also be represented in the partition statistic $\mathcal{O}$ as
\begin{theorem} Let $\D_o$ denote the set of partitions into odd number of distinct parts. Then,
\begin{align*}
\sum_{\pi\in {\D}_{o}} q^{\mathcal{O}(\pi)} &= \sum_{\pi \in \C_{\leq -1}} q^{|\pi|},
\intertext{and}
\sum_{\pi\in {\D}} (-1)^{\nu(\pi)}q^{\mathcal{O}(\pi)} &= -q+\sum_{\pi \in \C_{= 0}} q^{|\pi|},
\end{align*}
where $\nu(\pi)$ represents the number of parts of the partition $\pi$.
\end{theorem}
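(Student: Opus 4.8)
The plan is to derive both identities from the machinery already in place, by splitting $\D$ according to the parity of the number of parts and feeding the two pieces into Theorem~\ref{Weight_Change_Odd_Indexed_Parts_with_number_of_parts_known}, Theorem~\ref{RR2_THM}, and the crank dictionary \eqref{negative_crank}. This is really the $\mathcal{O}$-statistic restatement of Theorem~\ref{Tilde_Weights_THEOREM}. First I would write $\D=\D_e\sqcup\D_o$, so that $(-1)^{\nu(\pi)}=+1$ on $\D_e$ and $-1$ on $\D_o$, giving
\[
\sum_{\pi\in\D}(-1)^{\nu(\pi)}q^{\mathcal{O}(\pi)}=\sum_{\pi\in\D_e}q^{\mathcal{O}(\pi)}-\sum_{\pi\in\D_o}q^{\mathcal{O}(\pi)}.
\]
Thus it suffices to evaluate the two pieces, and the first asserted identity is exactly the evaluation of the $\D_o$-piece.

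For the first identity I would set $v=1$ in Theorem~\ref{Weight_Change_Odd_Indexed_Parts_with_number_of_parts_known} and sum \eqref{Odd_Index_Part_Weights} over all $l\geq 0$. Since $\D_o=\bigsqcup_{l\geq 0}\D_{2l+1}$ and $\bigcup_{l\geq 0}\P_{l+1}(1,2)=\bigcup_{M\geq 1}\P_M(1,2)$ exhausts every \emph{nonempty} partition of $\R\R_1$, this yields
\[
\sum_{\pi\in\D_o}q^{\mathcal{O}(\pi)}=\sum_{M\geq 1}\ \sum_{\pi\in\P_M(1,2)}\tilde{\omega}_1(\pi)\,q^{|\pi|}.
\]
I would then identify the right-hand side with $\sum_{\pi\in\R\R_1}\tilde{\omega}_1(\pi)q^{|\pi|}$ and invoke the first equality of \eqref{negative_crank} to conclude $\sum_{\pi\in\D_o}q^{\mathcal{O}(\pi)}=\sum_{\pi\in\C_{\leq-1}}q^{|\pi|}$.

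For the second identity I would substitute the two evaluations. The $\D_e$-piece is handled exactly as in \eqref{Half_of_RR2_Crank_result}: taking $v=0$ and summing over $l\geq 0$ gives $\sum_{\pi\in\D_e}q^{\mathcal{O}(\pi)}=\sum_{\pi\in\R\R_2}\omega_{2,2}(\pi)q^{|\pi|}=\sum_{\pi\in\C_{\geq 0}}q^{|\pi|}$ by Theorem~\ref{RR2_THM}. Combining this with the first identity produces
\[
\sum_{\pi\in\D}(-1)^{\nu(\pi)}q^{\mathcal{O}(\pi)}=\sum_{\pi\in\C_{\geq 0}}q^{|\pi|}-\sum_{\pi\in\C_{\leq-1}}q^{|\pi|}.
\]
Splitting $\C_{\geq 0}=\C_{=0}\sqcup\C_{\geq 1}$ and using the second equality of \eqref{negative_crank}, namely $\sum_{\pi\in\C_{\leq-1}}q^{|\pi|}=q+\sum_{\pi\in\C_{\geq 1}}q^{|\pi|}$, the two $\C_{\geq 1}$ contributions cancel and leave $-q+\sum_{\pi\in\C_{=0}}q^{|\pi|}$, as claimed.

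The hard part will be the bookkeeping at the empty partition together with the well-known $n=1$ anomaly of the crank, which is precisely the source of the $-q$ correction. The empty partition lies in $\D_e$ (its number of parts, $0$, is even) but not in $\D_o$, and it contributes the constant term $1$ to $\sum_{\pi\in\C_{\geq 0}}q^{|\pi|}$ (crank $0$) while contributing nothing to $\sum_{\pi\in\C_{\leq-1}}q^{|\pi|}$. Correspondingly, the $v=1$ sum never produces the empty partition, so the apparent empty-product value $\tilde{\omega}_1(\emptyset)=1$ must not be added; this is consistent with the derivation of \eqref{negative_crank}, where the empty partition enters with weight $\omega_{1,2}(\emptyset)-\omega_{2,2}(\emptyset)=0$. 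I expect the only genuine care needed is to confirm that these constant-term and $q^1$-term discrepancies are reconciled by exactly the $q$ appearing in the second equality of \eqref{negative_crank}; once that normalization is pinned down, the remainder is formal manipulation of the three established generating-function identities.
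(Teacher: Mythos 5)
Your proof is correct and follows essentially the same route the paper intends: summing Theorem~\ref{Weight_Change_Odd_Indexed_Parts_with_number_of_parts_known} at $v=1$ over $l$ and invoking \eqref{negative_crank} for the first identity, then subtracting this from the $\D_e$ evaluation given by \eqref{Half_of_RR2_Crank_result} and Theorem~\ref{RR2_THM} for the second --- which reproduces exactly the paper's own derivation of \eqref{zero_crank} as the difference of \eqref{omega_22} and \eqref{negative_crank}. Your explicit observation that the empty partition must enter with weight $0$ rather than the empty-product value $\tilde{\omega}_1(\emptyset)=1$, consistent with $\omega_{1,2}(\emptyset)-\omega_{2,2}(\emptyset)=0$ in the derivation of \eqref{negative_crank}, correctly settles the only delicate normalization.
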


We note that \cite{BerkovichUncu2}, and \cite{Masao} makes it possible to refine the results involving the partition statistic $\mathcal{O}$ by imposing bounds on the number of parts and the largest part of partitions. Moreover, there are many more fundamental statistics of partitions similar to $\mathcal{O}$, and $\E$. It would be of interest to see results and weights related to the rank of a partition and similar known classical partition statistics. The author is planning on addressing these observations in the future.

\section{Acknowledgement}

The author would like to thank George E. Andrews and Alexander Berkovich for their guidance. The author would also like to thank Alexander Berkovich, Jeramiah Hocutt, Frank Patane, and John Pfeilsticker for their helpful comments on the manuscript.

\end{document}